\def\Z{{\mathbb Z}}
\def\R{{\mathbb R}}
\def\Fc{{\mathcal F}}
\DeclareMathOperator{\cross}{Cross}
\def\occ{{\mathcal O}}
\def\iidocc{\occ_\Phi}
\def\be{\begin{equation}}
\def\ee{\end{equation}}
\def\bea{\begin{equation*}}
\def\eea{\end{equation*}}
\def\bal{\begin{aligned}}
\def\eal{\end{aligned}}
\def\eps{\varepsilon}
\def\Pbf{{\bf P}}
\def\Ebf{{\bf E}}
\def\Pr{{\mathbb P}}
\DeclareMathOperator{\E}{{\mathbb E}}
\DeclareMathOperator{\ind}{{\bf 1}}
\newtheorem{thm}{Theorem}
\newtheorem{prop}[thm]{Proposition}
\newtheorem{quest}{Question}
\theoremstyle{remark}
\newtheorem{ex}{Example}
\theoremstyle{definition}
\begin{document}

\title{Gilbert's disc model with geostatistical marking\thanks{The authors thank IMPA in Rio de Janeiro and the Department of Mathematics at Chalmers University of Technology and the University of Gothenburg for their hospitality during visits related to this work. This work was supported by grant 150804/2012-1 from the Brazilian CNPq and grant 637-2013-7302 from the Swedish Research Council (DA), as well as the Knut and Alice Wallenberg foundation (JT).}}

\date{\today}

\author{Daniel Ahlberg and Johan Tykesson}

\maketitle

\begin{abstract}
We study a variant of Gilbert's disc model, in which discs are positioned at the points of a Poisson process in $\R^2$ with radii determined by an underlying stationary and ergodic random field $\varphi:\R^2\to[0,\infty)$, independent of the Poisson process. This setting, in which the random field is independent of the point process, is often referred to as \emph{geostatistical marking}.
We examine how typical properties of interest in stochastic geometry and percolation theory, such as coverage probabilities and the existence of long-range connections, differ between Gilbert's model with radii given by some random field and Gilbert's model with radii assigned independently, but with the same marginal distribution. Among our main observations we find that complete coverage of $\R^2$ does not necessarily happen simultaneously, and that the spatial dependence induced by the random field may both increase as well as decrease the critical threshold for percolation.\\

\noindent
\emph{Keywords:} Continuum percolation; coverage probabilities; threshold comparison.\\
\emph{MSC 2010:} 60D05; 60K35; 60K37.
\end{abstract}

\section{Introduction}

Suppose that transceivers are positioned in the plane according to a Poisson point process. Whether a pair of transceivers are able to communicate with each other depends on their distance apart and their individual strength. More precisely, each transceiver is assigned a random value (the communication range) independently from a given distribution, and two transceivers are assumed to be able to communicate when their distance apart is at most the sum of their communication ranges. Of central interests are the geometric properties of the random subset of the plane consisting of all points within the communication range of some transceiver. Under the name of \emph{Gilbert's disc model} or \emph{Poisson Boolean percolation}, this model has been widely studied from the perspectives of stochastic geometry~\cite{chistokenmec13,schwei08} and percolation theory~\cite{bolrio06a,meeroy96}.

In this paper we study the following natural modification of this model. It is conceivable that the variation in communication range between different transceivers in many situations is better explained by external factors rather than on individual variation. As an example, the communication range could reflect the topography of the landscape where the transceiver is positioned. A way to incorporate this feature in the above model is to let the communication range of the transceivers be given by an underlying non-negative, stationary and ergodic random field. The random field imposes a dependence structure among the range of distinct Poisson points, which may alter the characteristics of the set of points in the plane within the range of communication of some transceiver. When the random field and the Poisson point process are independent, which will be the case in this paper, then this construction is known as \emph{geostatistical marking}, see e.g.~\cite{illpenstosto08,schribdig04}.

The main objective of this paper is to illustrate, in a few examples, the effect spatial dependence of a random field may have on coverage and percolation properties in Gilbert's disc model. That is, we shall compare the behaviour of Gilbert's model with geostatistical marking to Gilberts's model with radii sampled independently from the same marginal distribution as the random field. The following two examples will be enlightening in this regard.
The random field in the first example exhibits slowly decaying spatial correlations, while the second exhibits much faster decay of correlations.\footnote{In both examples the Poisson process used to generate the random field is independent from the Poisson processes used to determine the positions of the discs in the resulting Gilbert model.}

\begin{ex}\label{ex:cylinder}
Given a configuration of Poisson cylinders in the plane with unit radii, a stationary and ergodic random field is obtained by assigning non-negative i.i.d.\ random variables to the cylinders, and assigning to each point in the plane the smallest value among the cylinders it is contained in and zero if not contained in a cylinder.
\end{ex}

\begin{ex}\label{ex:box-model}
Given a Poisson point process in the plane, form its Voronoi tessellation and assign non-negative i.i.d.\ random variables to the Voronoi cells. The field whose value at each point is given by the value of the Voronoi cell in which the point is contained is a stationary and ergodic random field.
\end{ex}

The two examples may seem a bit artificial. However, they illustrate well the interesting phenomena that occur with geostatistical marking. First, observe that statistics such as the proportion of discs with radii in a given interval, depend only on the marginal distribution of the random field, due to the ergodic theorem, and is thus not affected by geostatistical marking. Nevertheless, we obtain in Example~\ref{ex:cylinder} a model where Gilbert's model with geostatistical marking does not cover the whole plane, whereas Gilbert's model with i.i.d.\ radii with the same marginal distribution is completely covered, almost surely. Moreover, Example~\ref{ex:box-model} illustrates the fact that Gilbert's model with geostatistical marking may be both more and less likely, depending on some parameter, to exhibit long-range connections as compared to its i.i.d.\ counterpart.


We complement the above observations with a few fundamental results relating to coverage and percolation properties. For instance, we show that complete coverage under geostatistical marking implies complete coverage for i.i.d.\ radii with the same marginal distribution. We also provide criteria for the existence of a subcritical and supercritical regime of percolation. Since our main objective has been to illustrate the effect of geostatistical marking to coverage and percolation properties, we have restricted our attention to the two dimensional setting.
The two dimensional setting is also the most natural in applications.

Gilbert~\cite{gilbert61} introduced his disc model in the 1960s, with constant radii, as a model for a random planar network with long-range connections. In~\cite{gilbert65} he went on to consider coverage problems in a related model. To the best of our knowledge, percolation theoretical questions in the geostatistical marking model have not been studied before. However, various authors have in recent years considered versions of Gilbert's model where discs are located according to point processes other than Poisson; see for instance the review of B{\l}aszczyszyn and Yogeshwaran~\cite{blayog15}. We mention in particular~\cite{blayog13}, by the same authors, for a discussion on the effect of clustering in point processes on the existence of long-range connections in the resulting Gilbert model. Related in this respect is also the dynamical model of Benjamini and Stauffer~\cite{bensta13}.
However, the original inspiration, at least partially, for the current study was a continuum version of a growth model with paralyzing obstacles, studied by van den Berg, Peres, Sidoravicius and Vares~\cite{berpersidvar08}.


In this paper, we merely lay out the basics for Gilbert's disc model with geostatistical marking. We hope that future work will address some of the questions that we have left open. For instance, we have failed to provide a sufficient condition for complete coverage. 
Moreover, although radii statistics are unaffected by geostatistical marking, it is less clear how edge count and edge length statistics of the associated random geometric graph are affected. As these are fundamental objects in the literature on random geometric graphs, see e.g.~\cite{penrose03}, this is another possible direction of future study.

\subsection{Description of the model}

Let $\eta$ be homogeneous Poisson point process on $\R^2\times[0,1]$ with intensity $\lambda\ge0$. Formally, $\eta$ is a random element on the space $(\Omega,\Fc)$ of counting measures on $\R^2\times[0,1]$, equipped with 
the smallest sigma algebra such that the sets $\{\nu\in\Omega:\nu(B)=k\}$ are measurable for integers $k\ge0$ and Borel sets $B\subseteq\R^2\times[0,1]$.
We shall denote the law of $\eta$ by $\Pr_\lambda$, and in order to ease the notation, we henceforth identify the random measure $\eta$ with its support.

We will throughout this paper let $\varphi:\R^2\to[0,\infty)$ denote a stationary and ergodic random field. That is, $\varphi$ will be a random element on the space $(\overline\Omega,\overline\Fc)$ of nonnegative functions on $\R^2$, equipped with the sigma algebra generated by the evaluation maps $\varphi\mapsto\varphi(x)$, for $x\in\R^2$.
That $\varphi$ is {\bf stationary} means that the law of $\varphi$, in the following denoted by $\overline\Pr$, is invariant with respect to translations by any vector $x\in\R^2$, i.e.,
$$
\big(\varphi(y_1+x),\varphi(y_2+x),\ldots,\varphi(y_k+x)\big)\stackrel{d}{=}\big(\varphi(y_1),\varphi(y_2),\ldots,\varphi(y_k)\big)
$$
for all $y_1,y_2,\ldots,y_k\in\R^2$ and $k\ge1$. In addition, the field is said to be {\bf ergodic} if every event $A\in\overline\Fc$ that is invariant with respect to translations\footnote{That is that $\sigma_x^{-1}(A)=A$ for all $x\in\R^2$, where $\sigma_x:\overline\Omega\to\overline\Omega$ is the shift $\varphi(\,\cdot\,)\mapsto\varphi(\,\cdot-x)$.\label{foot:shift}} occur with probability either zero or one.
Occasionally we will need to assume that the random field satisfies a stronger mixing condition, and that its law is invariant with respect to right angle rotations. In these cases these assumptions will be explicitly stated.

Finally, we equip $\Omega\times\overline\Omega$ with the product sigma algebra, and let $\Pbf_\lambda$ denote the product measure $\Pr_\lambda\times\overline\Pr$.
We remark that some care is usually needed when working with function spaces to ensure that the events of interest are measurable. In this paper, however, all events depend on the value of the field in a countable number of places, which effectively constitutes a random measure. Questions of measurability is in this setting a straightforward issue.

When working with geostatistical marking we shall often identify $\eta$ with its projection onto $\R^2$.
However, by considering a Poisson point process on $\R^2\times[0,1]$, we will be able to construct Gilbert's disc model with radii determined by a random field and independently assigned on the same probability space. More precisely, for a random field $\varphi:\R^2\to[0,\infty)$ we will let $\Phi$ denote its marginal distribution function
$$
\Phi(x):=\overline\Pr(\varphi(0)\le x).
$$
Then, given $(\eta,\varphi)\in\Omega\times\overline\Omega$ we define two random partitions $(\occ,\R^2\setminus\occ)$ and $(\iidocc,\R^2\setminus\iidocc)$ of the plane into an {\bf occupied} and {\bf vacant} set as follows:
$$
\occ(\eta,\varphi):=\bigcup_{(x,z)\in\eta}B\big(x,\varphi(x)\big)\quad\text{and}\quad\iidocc(\eta):=\bigcup_{(x,z)\in\eta}B\big(x,\Phi^{-1}(z)\big),
$$
where $B(x,r)$ denotes the closed Euclidean ball with radius $r$ centered at $x$ and $\Phi^{-1}$ denotes the generalized inverse $\Phi^{-1}(z):=\inf\{x\in\R:\Phi(x)\ge z\}$.\footnote{Recall that if $F^{-1}$ is the generalized inverse of some cumulative distribution function $F$ and $U$ is uniformly distributed on the interval $[0,1]$, then $F^{-1}(U)$ is distributed as $F$.} We have in $(\occ,\R^2\setminus\occ)$ a realization of Gilbert's disc model with radii obtained from a stationary and ergodic random field with marginal distribution $\Phi$, and in $(\iidocc,\R^2\setminus\iidocc)$ a realization of Gilbert's disc model with independent radii sampled from the same distribution $\Phi$.

In stochastic geometry one is typically interested in covering probabilities of compact sets and complete coverage of the whole plane. From a percolation perspective we have above all an interest in long-range connections and the existence of unbounded connected components, in which case we say that the model {\bf percolates}. Due to the ergodic nature of the random field, both the events of complete coverage and percolation in Gilbert's disc model with (or without) geostatistical marking are 0--1 events (see Proposition~\ref{prop:zero-one}). As it turns out, complete coverage is characterized by the marginal coverage probability in the sense that
$$
\Pbf_\lambda(0\in\occ)=1\quad\Leftrightarrow\quad\Pbf_\lambda(\occ=\R^2)=1.
$$
Moreover, complete coverage is a property of the underlying random field and irrelevant of the density $\lambda>0$ of the overlying Poisson process (see Proposition~\ref{prop:cc}).
The existence of an unbounded connected component in $\occ$ will, on the other hand, strongly depend on the value of $\lambda$. Percolation is characterized by a positive probability of the origin pertaining to an unbounded component. That is,
$$
\Pbf_\lambda\big(0\stackrel{\occ}{\leftrightarrow}\infty\big)>0\quad\Leftrightarrow\quad\Pbf_\lambda\big(\exists\text{ unbounded component}\big)=1,
$$
where $\{0\stackrel{\occ}{\leftrightarrow}\infty\}$ is shorthand for the event that there is a continuous curve $\gamma\subseteq\occ$ that connects the origin to points arbitrarily far away.
That the percolation probability is monotone in $\lambda$ follows from a standard coupling argument, and it is therefore customary to identify the critical value at which the transition from non-percolation to percolation occurs as
$$
\lambda_c:=\inf\big\{\lambda\ge0:\Pbf_\lambda\big(0\stackrel{\occ}{\leftrightarrow}\infty\big)>0\big\}.
$$

For Gilbert's disc model with i.i.d.\ radii, complete coverage and percolation are characterized analogously, and the circumstances for complete coverage and the existence of a non-trivial phase transition are well understood. Indeed, Hall~\cite{hall85} showed that complete coverage of $\R^2$ occurs if and only if the radii distribution $\Phi$ has infinite second moment, and Gou\'er\'e~\cite{gouere08} proved that when $\Phi$ has finite second moment the model exhibits a phase transition at some parameter $\lambda_\Phi\in(0,\infty)$. A precise description of the phase transition of Gilbert's disc model with i.i.d.\ radii has recently been derived by the first author with Tassion and Teixeira~\cite{ahltastei18a,ahltastei18b}.

\subsection{Description of the results}

Complete coverage in Gilbert's model with geostatistical marking implies complete coverage in the model with i.i.d.\ radii for the same marginal distribution. In fact, a straightforward calculation shows that
\begin{equation}\label{eq:point-coverage}
\Pbf_\lambda(0\in\occ)\,\le\,\Pr_\lambda(0\in\iidocc)
\end{equation}
for every stationary and ergodic random field $\varphi:\R^2\to[0,\infty)$ (see Proposition~\ref{prop:g-comparison}). However, for other compact sets, such as line segments, the inequality between the coverage probabilities may be reversed (see Proposition~\ref{prop:line-coverage}). Moreover, while~\eqref{eq:point-coverage} implies that the so-called `covered volume fraction' is always largest in the i.i.d.\ setting, one should not be fooled to believe that long-range connections are always less likely in the setting of geostatistical marking than with independently assigned radii. Indeed, the inequality may here go both ways (see Proposition~\ref{prop:box-large}), and while $\lambda_\Phi<\infty$ for every non-degenerate radii distribution, it may happen that $\lambda_c=\infty$. For instance, the latter is the case for the field in which each point takes the value given by half the distance to the closest neighbouring cell in an underlying Poisson Voronoi tessellation.

The above illustrates the more intrinsic role played by the spatial dependence induced by the geostatistical marking, which we highlight below in a theorem. This theorem summarizes Propositions~\ref{prop:plane-coverage},~\ref{prop:line-coverage} and~\ref{prop:box-large}, and is the result of a more detailed study of the random fields described in Examples~\ref{ex:cylinder} and~\ref{ex:box-model}, which will be conducted in Sections~\ref{sec:cylinder} and~\ref{sec:box-model}.

\begin{thm}\label{thm:comparison}
Consider Gilbert's disc model with geostatistical marking.
\begin{enumerate}[label=\itshape\alph*),topsep=3pt,partopsep=1pt,itemsep=3pt,parsep=1pt]
\item For every $\alpha>0$ there exists a stationary and ergodic field whose marginal distribution has infinite moment of order $1+\alpha$
whereas $\Pbf_\lambda(0\in\occ)<1$ for all $\lambda>0$.
\item There is a stationary and ergodic field such that for sufficiently large $n$ the probability of coverage of a line segment of length $n$ is strictly larger than in the i.i.d.\ setting.
\item There exists a one parameter family of stationary and ergodic random fields which for different values of the parameter yield either of the following two cases:
$$
0<\lambda_\Phi<\lambda_c<\infty\quad\text{and}\quad0<\lambda_c<\lambda_\Phi<\infty.
$$
\end{enumerate}
\end{thm}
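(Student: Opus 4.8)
All three assertions are independent; I would establish (a) and (b) via the cylinder field of Example~\ref{ex:cylinder} and (c) via the Voronoi field of Example~\ref{ex:box-model}. Note first that in both examples the marginal $\Phi$ of $\varphi(0)$ is exactly the law of the i.i.d.\ marks attached to the cylinders, respectively cells, because a typical point of the plane lies in a typical cylinder/cell. Write $\overline\Phi:=1-\Phi$, and let $\lambda_c^{\mathrm G}(r)$ denote the critical intensity of Gilbert's model with all radii equal to $r$, so that $\lambda_c^{\mathrm G}(r)=\lambda_c^{\mathrm G}(1)/r^2$.

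\emph{Part (a).} Attach to the cylinders i.i.d.\ marks $V$ with $\Pr(V>t)=(1+t)^{-\beta}$ and $\beta:=\min\{1+\alpha,2\}$, so that $\E[V]<\infty$ but $\E[V^{1+\alpha}]=\infty$. Conditioning on the event that a single cylinder passes through the origin gives $\overline\Phi(t)\ge c\,\Pr(V>t)$ for a constant $c>0$, whence $\E[\varphi(0)^{1+\alpha}]=\infty$; since moreover $\beta\le2$, the marginal has infinite second moment, so the i.i.d.\ model is completely covered by Hall's theorem~\cite{hall85}. To see that $\Pbf_\lambda(0\in\occ)<1$ for every $\lambda>0$, observe that $\varphi(x)>0$ forces $x$ to lie in some cylinder, so $\{0\in\occ\}\subseteq\bigcup_C E_C$ with $E_C:=\{\exists\,x\in\eta\cap C:\ |x|\le V_C\}$, the union taken over the cylinders $C$ of the configuration. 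I would condition on the line process (keeping $\eta$ and the marks $(V_C)$ independent), note that each $E_C$ is an increasing event of $(\eta,(V_C))$, and use the FKG inequality to get $\Pbf_\lambda(0\notin\occ\mid\text{lines})\ge\prod_C\Pr(E_C^c\mid\text{lines})$. Cylinders with $\dist(0,C)>0$ satisfy $\Pr(E_C\mid\text{lines})\le\Pr(V>\dist(0,C))<1$, and a first--moment computation over the line process yields $\E\big[\sum_C\Pr(V>\dist(0,C))\big]=\mathrm{const}\cdot(1+\E[V])<\infty$; hence that sum is a.s.\ finite and the corresponding factors multiply to something a.s.\ positive, while the finitely many cylinders with $\dist(0,C)=0$ each contribute a strictly positive factor because $\Leb(C\cap B(0,V_C))\le 4V_C<\infty$ a.s. This gives $\Pbf_\lambda(0\notin\occ)>0$. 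The only delicate point is the FKG/measurability bookkeeping; conceptually, $\E[V]<\infty$ makes the expected number of cylinders that could ever reach the origin finite, in tension with the infinite $(1+\alpha)$-th moment of the marginal.

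\emph{Part (b).} Use the cylinder field with marks $V$ of tail index $\beta>3$, so that $\E[V^2]<\infty$ and, by Hall's and Gou\'er\'e's theorems~\cite{hall85,gouere08}, the i.i.d.\ model has a non-trivial phase transition and does not cover the plane, so coverage of a fixed segment has probability strictly between $0$ and $1$. For a segment $L_n$ of length $n$ I would prove, on the i.i.d.\ side, the upper bound $\Pr_\lambda(L_n\subseteq\iidocc)\le C n^{2-\beta}$: the expected number of single discs covering $L_n$ is of order $n^{2-\beta}$ by a first-moment estimate, and one checks that coverings which genuinely need two or more large discs (or a long chain of small ones) contribute strictly less. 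On the geostatistical side I would exhibit the much larger lower bound $\Pbf_\lambda(L_n\subseteq\occ)\ge c\,n^{-1}(\log n)^{2-\beta}$ from a single favourable mechanism: a cylinder meeting the line of $L_n$ at a small angle $\theta$ and carrying a mark $V\gtrsim\log n$ produces a connected blob covering a sub-segment of length of order $V/\sin\theta$ (the lower bound on $V$ being exactly what rules out gaps between the Poisson points inside the cylinder); since the angle at which a random line meets a fixed line has density vanishing linearly at $0$ and $\pi$, the effective lengths $V/\sin\theta$ have tail index $2$, so a single such cylinder already covers all of $L_n$ with the stated probability. As $n^{-1}(\log n)^{2-\beta}\gg n^{2-\beta}$ precisely when $\beta>3$, this settles the claim for all large $n$. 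The obstacles are the i.i.d.\ upper bound and the no-large-gap estimate ensuring the shallow cylinder covers a segment of the advertised length.

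\emph{Part (c).} Take the Voronoi field of Example~\ref{ex:box-model} (or the cleaner variant in which the cells are the squares of a uniformly randomly shifted unit grid, so that cell areas are deterministic and the events "cell contains a Poisson point" are independent), with the auxiliary intensity and a marking probability $p\in(1/2,1)$ fixed, marks valued in $\{0,R\}$, and the parameter being $R$. The i.i.d.\ model is then Gilbert's model with radius-$R$ discs at intensity $\lambda p$, so $\lambda_\Phi=\lambda_c^{\mathrm G}(1)/(pR^2)$. For $R$ small I would show $\lambda_c<\lambda_\Phi$: since $\occ$ sits inside the radius-$R$ Boolean model at intensity $\lambda$ one has $\lambda_c\ge\lambda_c^{\mathrm G}(R)$, while for $\lambda$ just above $\lambda_c^{\mathrm G}(R)$ the radius-$R$ Boolean model is supercritical and, the cell being large on the scale $R$, crosses each cell; neighbouring crossings glue across the long shared edges, and since $p$ beats the site-percolation threshold of the cell graph (with margin to absorb the finite-range dependence of the "good cell" event) the good mark-$R$ cells percolate, so $\lambda_c=\lambda_c^{\mathrm G}(R)(1+o(1))<\lambda_c^{\mathrm G}(1)/(pR^2)=\lambda_\Phi$. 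For $R$ large I would show $\lambda_\Phi<\lambda_c$: now $R$ dwarfs the cell scale, so a mark-$R$ cell contributes the blob $B(\text{cell centre},R+O(1))$ but only when it contains a Poisson point, with probability $1-e^{-\lambda|V|}$; sandwiching $\occ$ between Boolean models carried by this thinned cell-centre process and exploiting that $1-e^{-\lambda|V|}<\lambda|V|$ strictly (by concavity) forces the critical intensity strictly above $\lambda_c^{\mathrm G}(1)/(pR^2)=\lambda_\Phi$, while $\lambda_c<\infty$ because at large $\lambda$ every cell is activated and $pR^2>\lambda_c^{\mathrm G}(1)$. I expect the crux of the whole theorem to lie here, in making the large-$R$ comparison quantitatively tight: one needs the Boolean model built on the clumped, non-Poisson process of activated cell centres to share the asymptotic critical intensity $\lambda_c^{\mathrm G}(1)/R^2$ of its Poisson counterpart, so that the strict loss in the activation thinning is not swamped by the discrepancy between the two point processes; the renormalisation behind the small-$R$ regime and the i.i.d.\ upper bound in part~(b) are also substantial, but of a more routine nature.
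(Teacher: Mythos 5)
Your part \emph{(a)} is correct and is essentially the paper's argument (Proposition~\ref{prop:plane-coverage}): in both cases the heart is the first--moment computation showing that the expected number of cylinders $C$ with $V_C\ge\dist(0,C)$ equals $2\pi u\big(r+\E[V]\big)<\infty$, so that with positive probability no cylinder can reach the origin. Your additional FKG layer is unnecessary --- on the event that this Poisson count is zero, no disc can cover the origin deterministically, which is how the paper concludes --- but it does no harm.

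Parts \emph{(b)} and \emph{(c)} take genuinely different routes from the paper, and both have gaps. For \emph{(b)} the paper takes the degenerate mark $F=\delta_1$, cylinder radius $2$ and small line intensity $u$, so that both sides reduce to unit-radius Boolean models and the comparison is between two exponential rates $e^{-\alpha(\lambda)s}$ and $e^{-\alpha(\lambda')s}$ with $\lambda'=\lambda\,\overline\Pr_u(0\in{\mathcal L})<\lambda$, the rate function being imported from~\cite{benjonschtyk09}; a single-cylinder event of probability $\gtrsim 1/s$ upgrades the field along the whole segment. Your heavy-tailed version instead rests on the unproven upper bound $\Pr_\lambda(L_n\subseteq\iidocc)\le Cn^{2-\beta}$. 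This is plausibly true, but it is a genuine lemma: one must rule out coverage by combinations of many intermediate-radius discs, not merely ``two or more large discs'', and nothing in the cited literature supplies it. Until that bound is in place, the comparison $n^{-1}(\log n)^{2-\beta}\gg n^{2-\beta}$ proves nothing. (The no-gap estimate on the geostatistical side is comparatively routine.)

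For \emph{(c)} the serious problem is your large-$R$ regime. The strict inequality $\lambda_\Phi<\lambda_c$ is extracted from the thinning loss $1-e^{-\lambda|V|}<\lambda|V|$, but at the relevant intensity $\lambda\asymp\lambda_\Phi\asymp R^{-2}$ this is a \emph{relative} loss of order $R^{-2}$, whereas the error you must simultaneously control --- the difference in critical intensity between the Boolean model driven by the clumped, dependently thinned process of activated cell centres and its Poisson counterpart --- has no reason to be $o(R^{-2})$. You correctly identify this as the crux, but as written the argument does not close, and I do not see how to make it close. The paper avoids the degeneracy structurally: in Proposition~\ref{prop:box-large} both field values $a,b$ are strictly positive, the parameter is the marking probability $p$ with the cell scale sent to infinity, and strictness is inherited from the fixed separation $\lambda_b<\lambda_\Phi(p)<\lambda_a$ of~\cite[Theorem~5.1]{roytan02}; one then only needs a renormalisation showing $\lambda_c(\mu,p)\to\lambda_b$ (resp.\ $\lambda_a$), with a margin that does not vanish. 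I recommend redesigning your family so that the two thresholds are separated by a constant rather than by a vanishing correction.
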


We recall that for Gilbert's model with i.i.d.\ radii the dichotomy between complete coverage and the existence of a non-trivial phase transition is explained by a finite second moment condition on the radii distribution. Finite second moment of the radii distribution is in the i.i.d.\ setting equivalent to decay of spatial correlations. The above theorem shows that moment conditions are no longer good indicators for the existence of a non-trivial phase transition for Gilbert's model with geostatistical marking. We shall instead provide a sufficient criterion in terms of estimates on spatial correlations.

Let $B_\infty(x,r)$ denote the $\ell_\infty$-ball (i.e.\ square) centred at $x$ with radius $r$.\footnote{Not to be confused with $B(x,r)$ which denotes the Euclidean disc.} Fix $\eps_0\in(0,1/5]$ and let
$$
\overline\pi(n):=\sup_{g_1,g_2}\big|\overline\E[g_1g_2]-\overline\E[g_1]\overline\E[g_2]\big|,
$$
where the supremum is taken over all functions $g_1,g_2:\overline\Omega\to[0,1]$ such that $g_1\in\sigma(\{\varphi(x):x\in B_\infty(0,(1+\eps_0/4)n)\})$ and $g_2\in\sigma(\{\varphi(x):x\in B_\infty(((2+\eps_0)n,0),(1+\eps_0/4)n)\})$. The condition $\overline\pi(n)\to0$ as $n\to\infty$ implies that the field is mixing, and is hence a stronger condition than mere ergodicity. Some of our arguments will require this stronger condition.

We note that $\overline\pi$ measures correlations in the field between well separated regions, but does not take into account the actual magnitude of the field in these regions. Indeed, $\overline\pi$ is unaffected by any transformation that redefines the magnitude of the field in an injective manner. We will therefore need to complement $\overline\pi$ with a measure that captures the spatial correlation caused by discs covering large areas.
For any compact set $K\subset\R^2$, let $\occ_K(\eta,\varphi):=\occ(\eta\cap K,\varphi)$. Set $K=B_\infty(0,n)$ and $K'=B_\infty(0,(1+\eps_0/4)n)$ and define
$$
\pi_\lambda(n):=\Pbf_\lambda\big(\occ\cap K\neq\occ_{K'}\cap K\big).
$$

When the field is bounded, then $\pi_\lambda(n)=0$ for all large $n$, and a simple comparison with the constant radii model shows that $\lambda_c>0$. When the field is bounded away from zero, we similarly have $\lambda_c<\infty$, regardless of the correlations in the field. However, in order to give a general condition for a non-trivial phase transition we shall require an estimate on spatial correlations.
In order to guarantee that $\lambda_c>0$ we require only a minimal assumption on both of the above measures of correlations. To deduce that $\lambda_c<\infty$ we require no condition on $\pi_\lambda$,
but instead add a condition on the structure of the field.

For $s,t>0$ we let $\cross(s,t)$ denote the event that the restriction of $\occ$ to the rectangle $[0,s]\times[0,t]$ contains a continuous curve $\gamma$ connecting the left and right sides $\{0\}\times[0,t]$ and $\{s\}\times[0,t]$. In other words, $\cross(s,t)$ is the event that there is an occupied horizontal crossing of the rectangle $[0,s]\times[0,t]$. For $M>0$ we let $\cross_M(s,t)$ denote the analogous event for $\varphi$ replaced by its truncation $\varphi_M:=\min\{\varphi,M\}$.

\begin{thm}\label{thm:nontrivial}
Consider Gilbert's disc model with geostatistical marking, and assume, in addition, that the law of random field is invariant with respect to right angle rotations.
\begin{enumerate}[label=\itshape\alph*),topsep=3pt,partopsep=1pt,itemsep=3pt,parsep=1pt]
\item If, for some $\lambda>0$, $\pi_\lambda(n)$ and $\overline\pi(n)$ tend to zero as $n$ tends to infinity, then $\lambda_c>0$.
\item If, for some $\alpha>0$ and $M<\infty$, we have $\overline\pi(n)\le(\alpha\log n)^{-(1+\alpha)}$ for $n\ge1$ and
\begin{equation}\label{eq:thm2}
\lim_{n\to\infty}\lim_{\lambda\to\infty}\Pbf_\lambda\big(\cross_M(3n,n)\big)=1,
\end{equation}
then $\lambda_c<\infty$.
\end{enumerate}
\end{thm}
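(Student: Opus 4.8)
The plan is to treat the two parts separately, each via a renormalization argument that compares the geostatistically marked model to an appropriate independent percolation model on a coarse lattice. For part~a), fix $\lambda>0$ with $\pi_\lambda(n),\overline\pi(n)\to0$ and choose $n$ large. Tile the plane by the squares $B_\infty(x,n)$ centred at the vertices $x$ of the lattice $(2+\eps_0)n\,\Z^2$, and declare such a square \emph{good} if $\occ\cap B_\infty(x,n)$ is empty. Because $\pi_\lambda(n)$ is small, the status of a box depends, up to a small error, only on $\eta$ and $\varphi$ inside the enlarged box $B_\infty(x,(1+\eps_0/4)n)$; two non-adjacent boxes then involve disjoint regions of $\eta$, and the bound $\overline\pi(n)$ controls the residual dependence through the field. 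A first-moment computation shows $\Pbf_\lambda(\text{a given box is good})\to1$ as $\lambda$ is fixed and $n\to\infty$ — since a fixed box is covered only if some Poisson point with large radius lies within distance given by its radius, and the expected area so contributed is finite — so by the domination theorem of Liggett--Schonmann--Stacey (or a direct Peierls argument using $\overline\pi$) the good boxes dominate a supercritical independent site percolation, hence percolate, and one checks that an infinite cluster of good boxes forces $\occ$ to be bounded near the origin. Thus $\Pbf_\lambda(0\stackrel{\occ}{\leftrightarrow}\infty)=0$, giving $\lambda_c\ge\lambda>0$.

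For part~b), the idea is the standard Russo--Seymour--Welsh-type block argument adapted to a mixing marked model, but carried out for the \emph{truncated} field $\varphi_M$, which makes it a bounded-range model up to the field correlations measured by $\overline\pi$. Assumption~\eqref{eq:thm2} furnishes, for any $\delta>0$, a scale $n$ and a density $\lambda$ so large that $\Pbf_\lambda(\cross_M(3n,n))\ge1-\delta$; by the assumed right-angle rotational invariance the same holds for vertical crossings of $n\times3n$ rectangles. Declare a box of side $n$ in a coarse lattice \emph{good} if the truncated occupied set makes the prescribed horizontal and vertical crossings of the $3n\times n$ rectangles attached to it in the usual way, so that good boxes chain together into an unbounded occupied path in $\occ_M\subseteq\occ$. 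Since each crossing event for $\varphi_M$ depends only on $\eta$ and $\varphi$ restricted to an enlarged box of side at most $(1+\eps_0/4)\cdot(\text{const})\cdot n$, two boxes at coarse-lattice distance $\ge2$ are independent conditionally on the field up to an error bounded by $\overline\pi$ evaluated at scale comparable to $n$. Now we let $n$ grow along a sequence; the hypothesis $\overline\pi(n)\le(\alpha\log n)^{-(1+\alpha)}$ ensures the dependence decays fast enough that a telescoping/coarse-graining of scales (or the Liggett--Schonmann--Stacey domination, whose hypothesis here requires the per-box defect probability to beat a constant that does not depend on $n$) yields stochastic domination of a supercritical Bernoulli percolation; hence $\occ$ percolates at this finite $\lambda$, so $\lambda_c<\infty$.

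The main obstacle, in both parts, is handling the dependence between blocks: unlike the i.i.d.\ Boolean model, the status of distant blocks is not exactly independent, so one cannot invoke Bernoulli percolation directly. The measure $\overline\pi$ is precisely designed to be the input of a domination theorem, but one must be careful that the blocks are chosen so that (i) the Poisson contributions to non-adjacent blocks come from disjoint regions — this uses the gap of order $\eps_0 n$ built into the definition of $\overline\pi$ via the displacement $((2+\eps_0)n,0)$ and the enlargement factor $1+\eps_0/4$ — and (ii) the field contributions are separated by at least that gap, so that the defect probability is at most a base term (going to $0$ by the first-moment estimate in~a) or by~\eqref{eq:thm2} in~b)) plus $C\overline\pi(cn)$. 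In part~b) the quantitative rate $(\alpha\log n)^{-(1+\alpha)}$ is exactly what is needed to run a multi-scale renormalization (each scale contributing a summable correction), so the delicate point is to set up the recursion on scales $n_k$ so that the errors $\sum_k\overline\pi(n_k)$ converge while the crossing probabilities stay bounded below; I expect this bookkeeping, rather than any single estimate, to be the technically heaviest part of the proof.
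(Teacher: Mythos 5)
Your part~a) breaks down at the base estimate. You declare a box $B_\infty(x,n)$ \emph{good} when $\occ\cap B_\infty(x,n)=\emptyset$ and claim this has probability tending to $1$ as $n\to\infty$ with $\lambda$ fixed. The opposite is true: such a box contains on the order of $\lambda n^2$ Poisson points, each carrying a disc of radius $\varphi$, so unless the field vanishes identically the box meets $\occ$ with probability tending to $1$; your first-moment computation concerns coverage by \emph{distant} points and does not address this. More fundamentally, the conclusion you draw, $\lambda_c\ge\lambda$ for the very $\lambda$ appearing in the hypothesis, cannot be right: for the constant field $\varphi\equiv1$ one has $\pi_\lambda(n)=\overline\pi(n)=0$ for all large $n$ and every $\lambda$, including supercritical ones. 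The hypothesis only gives decay of correlations at some intensity; to obtain $\lambda_c>0$ one must then \emph{decrease} the intensity, and your scheme has no mechanism for doing so. The paper's route is: by Proposition~\ref{prop:cor-bound} and monotonicity of $\pi_\lambda$ in $\lambda$, $\rho_\lambda(n)\to0$ uniformly for $\lambda\in[0,1]$, which feeds the finite-size criterion of Proposition~\ref{prop:finite-size}; one then picks $\delta>0$ so small that $\Pbf_\delta(\cross(n,3n))<1/200$ at a single scale $n\ge N$ (few Poisson points in a large box), concludes $\Pbf_\delta(\cross(n,3n))\to0$, and rules out percolation at intensity $\delta$ by the standard annulus/union bound.

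For part~b) your overall plan --- truncate to $\varphi_M$, extract from \eqref{eq:thm2} a scale $n$ and a large $\lambda$ with $\Pbf_\lambda(\cross_M(3n,n))$ close to $1$, then renormalize --- matches the paper, but the passage from high crossing probabilities to percolation is not secured. Liggett--Schonmann--Stacey does not apply: after truncation the Poisson contributions are finite range, but the field correlations measured by $\overline\pi$ are only controlled between two regions separated by a gap \emph{proportional to their own size}; there is no $k$-dependence and no decay in the coarse-lattice distance, so no single-scale per-box estimate yields domination by a supercritical product measure. The paper instead proves a genuinely multi-scale statement (Proposition~\ref{prop:infinite-size}): conditioning on $\varphi$ makes crossings of well-separated enlarged rectangles independent, giving the recursion $\overline q_\lambda(3n)\le 49\,\overline q_\lambda(n)^2+\overline\pi(9n)$; iterating with $\ell=\lfloor k/2\rfloor$ converts the hypothesis $\overline\pi(n)\le(\alpha\log n)^{-(1+\alpha)}$ into summability of $\overline q_\lambda(3^k)$, and Borel--Cantelli plus chaining of crossings of alternating $3^{k+1}\times3^k$ rectangles produces the unbounded component. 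You correctly guess that a summable multi-scale correction is the right mechanism and that the $\log$-power rate is tailored to it, but the quadratic gain $\overline q(3n)\lesssim\overline q(n)^2$ --- which is what turns the barely-decaying $\overline\pi$ into a summable sequence --- is the heart of the argument and is missing from your sketch.
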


The condition in~\eqref{eq:thm2} may seem puzzling at first, but is easily verifiable in a number of concrete examples. For instance, in Example~\ref{ex:cylinder} the condition holds whenever the distribution used to assign values to cylinders either has no atom at zero, or gives positive weight to values strictly larger than one. In Example~\ref{ex:box-model} the condition holds, in particular, whenever the distribution assigning weights to the cells gives less than half its mass to the value zero. As we explain at the end of Section~\ref{sec:nontriviality}, the condition in~\eqref{eq:thm2} is indeed also necessary for a large class of bounded random fields.

The point with the condition in~\eqref{eq:thm2} is, of course, to rule out that large regions at which the field attains the value zero could prevent percolation. In certain cases level sets of the random field is a natural notion to consider. It may then be useful to address the condition in~\eqref{eq:thm2} in terms of these level sets. For $\alpha>0$ and $n\ge1$, let $L_{\alpha,n}(\varphi)$ denote the event that there is a continuous curve $\gamma$ contained in the restriction $\Gamma$ of $\textup{int}\{x\in\R^2:\varphi(x)>\alpha\}$ to the rectangle $[0,3n]\times[0,n]$, connecting the left and right sides $\{0\}\times[0,n]$ and $\{3n\}\times[0,n]$. It is not hard to show\footnote{By compactness one may find a finite set of discs with radius at most $\alpha/2$ whose union contains $\gamma$ and is contained in $\Gamma$. If each disc contains a point of $\eta$, which is likely for large $\lambda$, then $\cross_\alpha(3n,n)$ occurs.} that if $\overline\Pr(L_{\alpha,n}(\varphi))\to1$ as $n\to\infty$, then~\eqref{eq:thm2} holds. 

We have had no intention in providing a complete description of Gilbert's disc model with geostatistical marking in this study. Instead we have aimed to describe how some of its features may differ from what is observed in the i.i.d.\ setting, and for that reason focused on the planar setting. Our results take a mere first step towards a more general understanding of the model, which we hope will be better understood in future work. Recent work on Gilbert's model with i.i.d.\ radii give a detailed description of its phase transition~\cite{ahltastei18a,ahltastei18b}. These works certainly hint at what to expect also in the case of geostatistical marking, but we expect that the spatial dependence in the random field will pose certain challenges that one would need to overcome. In higher dimensions one should be able to extend several of the results obtained here in two dimensions. For instance, it seems likely that techniques used by Gou\'er\'e~\cite{gouere08} could be used to obtains a version of Theorem~\ref{thm:nontrivial} above in dimensions higher than two.

\section{Fundamentals}

We will in this section go through some preliminary observations that will be important for the remainder of the paper. We aim to work under minimal assumptions on the random field used for the geostatistical marking. For instance, although the specific examples we consider typically are positively correlated, for none of the arguments or techniques we use will this be a requirement. We remark, however, that most natural notions of positive association in the random field implies positive association of the resulting measure $\Pbf_\lambda$. For instance, one such natural notion is to call a random field $\varphi:\R^2\to[0,\infty)$ {\bf positively associated} if for all monotone functions $g_1,g_2:\overline\Omega\to[0,1]$, that is $g(\varphi)\ge g(\varphi')$ whenever $\varphi\ge\varphi'$ point-wise, we have
$$
\overline\E[g_1(\varphi)g_2(\varphi)]\ge\overline\E[g_1(\varphi)]\overline\E[g_2(\varphi)].
$$





\subsection{A zero-one law}

Complete coverage and the existence of an unbounded occupied component are examples of events that are invariant with respect to translations in the plane. Events which are measurable with respect to a Poisson process (that is, events in $\Fc$) and invariant under translations are well-known to occur with probability either $0$ or $1$, see e.g.~\cite[Proposition~2.8]{meeroy96}. We shall see that also in our setting, in which the point of the Poisson process are marked by an independent random field,
invariant events are 0-1 events.

Let $T({\mathbb R}^2)$ be the group of translations on ${\mathbb R}^2$. Ergodicity of the random field has the following equivalent characterization: For all bounded measurable functions $g_1,g_2:\overline\Omega\to\R$ we have
\begin{equation}\label{def:ergodicity}
\lim_{t\to\infty}\frac{1}{t^2}\int_{[0,t]^2}\overline\E[g_1\circ\sigma_x\cdot g_2]\,dx\,=\,\overline\E[g_1]\overline\E[g_2],
\end{equation}
where $\sigma_x$ denotes the shift along the vector $x\in\R^2$ (see Footnote~\ref{foot:shift}). That~\eqref{def:ergodicity} is equivalent to ergodicity is well-known, and follows, for instance, by straightforward modifications of the proof of Proposition~2.5 in~\cite{meeroy96}.

With a slight abuse of notation, we shall also let $\sigma_x:\Omega\times\overline\Omega\to\Omega\times\overline\Omega$ denote the shift
$$
\big(\eta(\,\cdot\,),\varphi(\,\cdot\,)\big)\mapsto\big(\eta(\,\cdot-x),\varphi(\,\cdot-x)\big),
$$
where $x\in\R^2$.
We say that an event $A\subseteq\Omega\times\overline\Omega$ is {\bf invariant under diagonal action} of $T({\mathbb R}^2)$ if for all $x\in\R^2$ we have $\sigma_x^{-1}(A)=A$.

\begin{prop}\label{prop:zero-one}
If $A$ is invariant under diagonal action of $T({\mathbb R}^2)$, then ${\bf P}_{\lambda}(A)\in \{0,1\}.$
\end{prop}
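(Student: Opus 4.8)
The plan is to adapt the classical argument that translation-invariant events measurable with respect to a Poisson process are trivial (as in \cite[Proposition~2.8]{meeroy96}), but now keeping track of the extra randomness coming from the field $\varphi$. The natural framework is to combine two facts: that the Poisson process $\eta$ is \emph{mixing} under translations (with respect to $\Pr_\lambda$), and that the field $\varphi$ is \emph{ergodic} under translations, in the averaged sense expressed by~\eqref{def:ergodicity}. The key point is that the product of a mixing system and an ergodic system is again ergodic, and here we have something even better since the $\eta$-component is mixing rather than merely ergodic.

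First I would reduce to a density/approximation statement. Fix an event $A$ invariant under the diagonal action, and approximate $\ind_A$ in $L^2(\Pbf_\lambda)$ by a function $f$ that depends only on the restriction of $(\eta,\varphi)$ to a bounded box $B_\infty(0,R)$; that is, $f$ is measurable with respect to $\eta\cap B_\infty(0,R)$ and finitely many evaluations $\varphi(y_1),\dots,\varphi(y_k)$ with $y_i\in B_\infty(0,R)$ (this is where we use the remark in the excerpt that all events of interest depend on the field in countably many places, so such cylinder functions are dense). Write $f$ in the form $f=\sum_j a_j h_j$ or, more conveniently, simply regard $f$ as a bounded function of $(\eta,\varphi)$ supported on the box. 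For $x\in\R^2$ with $|x|$ large, $f\circ\sigma_x$ depends on $(\eta,\varphi)$ restricted to the far-away box $B_\infty(x,R)$.

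Next I would compute the averaged correlation. Using independence of $\eta$ and $\varphi$ under $\Pbf_\lambda$, and conditioning first on $\varphi$, one gets for the Poisson part a genuine independence once the two boxes $B_\infty(0,R)$ and $B_\infty(x,R)$ are disjoint, i.e.\ for $|x|>2R$; thus $\Pbf_\lambda[f\cdot(f\circ\sigma_x)\mid\varphi]$ factors as a product of two terms, each a bounded measurable function of $\varphi$ restricted to one of the two boxes. Taking expectation over $\varphi$ and then averaging over $x\in[0,t]^2$, the Poisson randomness having been integrated out, what remains is exactly an expression of the form appearing on the left-hand side of~\eqref{def:ergodicity} with $g_1,g_2$ the two conditional-expectation functions of $\varphi$ (translated back to the origin). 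Hence
$$
\lim_{t\to\infty}\frac{1}{t^2}\int_{[0,t]^2}\Pbf_\lambda\big[f\cdot(f\circ\sigma_x)\big]\,dx=\Pbf_\lambda[f]^2,
$$
by~\eqref{def:ergodicity}. Then one transfers this from $f$ to $\ind_A$ by the $L^2$-approximation: since $A$ is invariant, $\ind_A\circ\sigma_x=\ind_A$, so $\Pbf_\lambda(A)=\Pbf_\lambda[\ind_A\cdot(\ind_A\circ\sigma_x)]$, and the averaged quantity for $\ind_A$ is exactly $\Pbf_\lambda(A)$; comparing with the limit for $f$ and letting the approximation error go to zero yields $\Pbf_\lambda(A)=\Pbf_\lambda(A)^2$, hence $\Pbf_\lambda(A)\in\{0,1\}$.

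The main obstacle, and the step deserving the most care, is the bookkeeping in the conditioning argument: making precise that, conditionally on $\varphi$, the restrictions of $\eta$ to two disjoint boxes are independent (a standard property of Poisson processes, using that $\eta$ and $\varphi$ are independent), and that the resulting conditional expectations are themselves bounded functions measurable with respect to $\varphi$ on the respective boxes, so that~\eqref{def:ergodicity} genuinely applies. A secondary technical point is justifying the interchange of the $t\to\infty$ limit with the $L^2$-approximation; this is routine since all functions involved are bounded by $1$ and the average over $[0,t]^2$ is an average of $L^2$-contractions, so the approximation error is controlled uniformly in $t$.
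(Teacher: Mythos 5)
Your argument is correct and follows essentially the same route as the paper: both proofs localize $\ind_A$ to a bounded box, use that conditionally on $\varphi$ the restrictions of $\eta$ to disjoint boxes are independent, and then invoke the averaged-correlation form of ergodicity~\eqref{def:ergodicity} to factorize the spatially averaged correlation and conclude $\Pbf_\lambda(A)=\Pbf_\lambda(A)^2$. The only difference is the localization device: you use $L^2$-density of cylinder functions, whereas the paper applies L\'evy's 0--1 law to the conditional probabilities $\Pbf_\lambda(A\mid\Fc_{x,n})$ thresholded at $1/2$.
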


\begin{proof}
The proof is similar to the proof of Lemma~2.6 in~\cite{hagjon06}. Given $x\in\R^2$ and $n\ge1$ let $\mathcal{F}_{x,n}$ denote the sigma algebra generated by the restriction of $\eta$ and $\varphi$ to $B_\infty(x,n)$. Set
$$
I_{x,n}:=\ind_{\{{\mathbf P}_{\lambda}(A|\mathcal{F}_{x,n})>1/2\}}.
$$
Then, by Levy's 0-1 law, we have
\begin{equation}\label{e.ergod4}
\lim_{n\to \infty}I_{x,n}=\ind_A\quad\text{${\mathbf P}_{\lambda}$-almost surely}.
\end{equation}
Since $A$ is invariant under diagonal action of $T({\mathbb R}^2)$, the laws of $(I_{0,n},\ind_A)$ and $(I_{x,n},\ind_A)$ are the same.
Consequently, uniformly in $x\in\R^2$, we have
\begin{equation}\label{e.ergod1}
\lim_{n\to \infty} {\mathbf P}_{\lambda}(I_{0,n}=I_{x,n}=\ind_A)\,\ge\,\lim_{n\to\infty}\big[1-2\Pbf_\lambda(I_{0,n}\neq\ind_A)\big]
\,=\,1.
\end{equation}

For $x$ outside of $B_\infty(0,2n)$, we get for any $i,j\in\{0,1\}$ that
$$
\Pbf_\lambda(I_{0,n}=i,I_{x,n}=j|\varphi)=\Pbf_\lambda(I_{0,n}=i|\varphi)\Pbf_\lambda(I_{x,n}=j|\varphi).
$$
Since $\overline\Pr$ is assumed to be ergodic, it follows from~\eqref{def:ergodicity} that the limit (for fixed $n$)
$$
\lim_{t\to\infty}\frac{1}{t^2}\int_{[0,t]^2}\Pbf_\lambda(I_{0,n}=i,I_{x,n}=j)\,dx=\lim_{t\to\infty}\frac{1}{t^2}\int_{[0,t]^2}\overline\E\big[\Pbf_\lambda(I_{0,n}=i|\varphi)\Pbf_\lambda(I_{x,n}=j|\varphi)\big]\,dx
$$
equals ${\mathbf P}_{\lambda}(I_{0,n}=i){\mathbf P}_{\lambda}(I_{0,n}=j)$. On the other hand, due to~\eqref{e.ergod1}, we have for each $\delta>0$ and all large $n$ that
$$
\delta\,>\,\lim_{t\to\infty}\frac{1}{t^2}\int_{[0,t]^2}\Pbf_\lambda(I_{0,n}=1,I_{x,n}=0)\,dx\,=\,{\mathbf P}_{\lambda}(I_{0,n}=1){\mathbf P}_{\lambda}(I_{0,n}=0).
$$
That is, sending $n$ to infinity and $\delta$ to zero leaves us with
$$
0=\Pbf_\lambda(A)\big[1-\Pbf_\lambda(A)\big],
$$
and hence that ${\mathbf P}_{\lambda}(A)\in\{0,1\}$, as required.
\end{proof}


\subsection{Complete coverage}

The following gives a condition for complete coverage based on the coverage probability of the origin. The analogous statement is of course well known in the i.i.d. setting, see~\cite{hall85}.

\begin{prop}\label{prop:cc}
The following statements are equivalent:
\begin{enumerate}[label=\itshape\alph*),topsep=3pt,partopsep=1pt,itemsep=3pt,parsep=1pt]
\item $\Pbf_\lambda(0\in\occ)=1$ for some $\lambda>0$.
\item $\Pbf_\lambda(0\in\occ)=1$ for every $\lambda>0$.
\item $\Pbf_\lambda(\occ=\R^2)=1$ for every $\lambda>0$.
\end{enumerate}
\end{prop}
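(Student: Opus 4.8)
The plan is to prove the chain of implications $(a)\Rightarrow(c)\Rightarrow(b)\Rightarrow(a)$, where the implication $(b)\Rightarrow(a)$ is trivial. The heart of the matter is $(a)\Rightarrow(c)$: from coverage of a single point at one intensity one must deduce full coverage of the plane at every intensity.

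First I would observe that the event $\{\occ=\R^2\}$ is invariant under diagonal action of $T(\R^2)$, so by Proposition~\ref{prop:zero-one} it has probability $0$ or $1$; hence to prove $(c)$ it suffices to show $\Pbf_\lambda(\occ=\R^2)>0$. The key point is that complete coverage is really a property of the random field $\varphi$ alone, not of $\lambda$. Indeed, conditionally on $\varphi$, the question of whether $\occ=\R^2$ is a statement about the Poisson process $\eta$ with intensity $\lambda$ marked by the deterministic-looking radii $\varphi(x)$. Here I would use the standard fact (Hall~\cite{hall85}, or a direct second-moment/union-bound argument as in~\cite{meeroy96}) that for a Poisson Boolean model the covered-volume fraction and hence the coverage probability of a fixed point is $1$ if and only if the expected area covering that point is infinite, and that this in turn forces complete coverage; the crucial feature is that ``$\E[\text{area of ball through }0]=\infty$'' is insensitive to the value of $\lambda>0$ (scaling $\lambda$ just scales the expectation by a constant). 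Concretely: from $(a)$, $\Pbf_\lambda(0\in\occ)=1$ means that for $\overline\Pr$-almost every $\varphi$, the (conditional) Poisson model covers the origin a.s.; by the Borel–Cantelli/first-moment characterization this means $\lambda\cdot\overline\E[\,\cdot\,]$-type integrals diverge in a way that does not see $\lambda$, so for \emph{every} $\lambda'>0$ the conditional model also covers the origin a.s., giving $\Pbf_{\lambda'}(0\in\occ)=1$ for all $\lambda'>0$, i.e.\ $(b)$. Then, again working conditionally on $\varphi$, a Fubini argument shows $\Pbf_\lambda(\occ=\R^2)=1$: since almost every point of $\R^2$ is covered almost surely, by Fubini the covered set has full Lebesgue measure a.s.; and then one upgrades ``full measure'' to ``all of $\R^2$'' using that the complement $\R^2\setminus\occ$ is open, together with a Baire-category or a direct argument that an open set of zero Lebesgue measure that is translation-invariant-in-law must be empty with probability one (alternatively, one covers a fixed compact square by finitely many deterministic subsquares and applies the point-coverage statement at a dense countable set combined with right-continuity coming from closed balls).

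I expect the main obstacle to be the passage from ``the origin is covered a.s.'' to ``the whole plane is covered a.s.'', and in particular making the conditioning-on-$\varphi$ argument rigorous: one must be careful that $\{\occ=\R^2\}$ is measurable and that Fubini applies, and one must handle the fact that $\occ$ is a \emph{closed} set so that having full Lebesgue measure is not automatically the same as being all of $\R^2$. The cleanest route is probably: condition on $\varphi$; apply the i.i.d.-type theory of~\cite{hall85} to the conditional Poisson Boolean model (whose radii are now fixed functions of position but still produce, for the origin, a sum of i.i.d.\ contributions over Poisson points) to get that either $0\notin\occ$ with positive conditional probability or $\occ=\R^2$ conditionally a.s.; then take expectations over $\varphi$ to conclude. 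Throughout, the $\lambda$-independence is the recurring theme and is exactly what makes $(a)$, $(b)$, $(c)$ collapse onto one another.
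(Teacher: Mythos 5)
Your reduction of \emph{(a)}$\Rightarrow$\emph{(b)} is sound and is essentially the paper's argument in disguise: conditionally on $\varphi$ the origin is covered with probability $1-\exp(-\lambda\Leb\{x:\varphi(x)\ge|x|\})$, so whether this equals $1$ does not depend on $\lambda$; the paper packages the same fact as $\Pbf_{2\lambda}(0\notin\occ)=\overline\E[\Pr_\lambda(0\notin\occ|\varphi)^2]\ge\Pbf_\lambda(0\notin\occ)^2$ together with monotonicity in $\lambda$. The genuine gap is in the passage from point coverage to $\occ=\R^2$, and none of the three routes you sketch closes it. (i) You cannot assume $\R^2\setminus\occ$ is open: $\occ$ is a union of closed discs that, precisely in the complete-coverage regime, is \emph{not} locally finite, and a non-locally-finite union of closed discs need not be closed (take discs $B((n,0),n-1-1/n)$: their union accumulates at $(1,0)$ without containing it). So ``full Lebesgue measure'' plus ``open complement is empty'' does not apply. (ii) Covering a countable dense set by closed discs does not cover its closure, so that route also fails. (iii) Most importantly, Hall's dichotomy cannot be applied conditionally on $\varphi$: given $\varphi$, the radii are a fixed function of position, not i.i.d., and for such models ``the origin is a.s.\ covered'' does not force ``a square is a.s.\ covered'' --- point coverage is governed by $\Leb\{x:\varphi(x)\ge|x|\}$ while coverage of $[0,1]^2$ by a single disc is governed by $\Leb\{x:\varphi(x)\ge\sup_{w\in[0,1]^2}|x-w|\}$, and the first integral can be infinite while the second is zero (think of $\varphi(x)=|x|$). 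Ruling out such behaviour is exactly where stationarity and ergodicity must enter, and your proposal never uses them at this step.

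For comparison, the paper's route is: from \emph{(b)} (coverage at \emph{every} $\lambda$ --- this is where the full strength of \emph{(b)} is used) and the decomposition $\Pbf_\lambda(0\in\occ)\le\Pbf_\lambda(0\in\occ(\eta\cap K,\varphi))+\Pbf_\lambda(0\in\occ(\eta\cap K^c,\varphi))$, letting $\lambda\downarrow0$ kill the first term and invoking monotonicity, one gets that the origin is a.s.\ covered by discs whose centres lie outside any given compact set, hence by infinitely many discs with arbitrarily large radii. Infinitely many of these centres lie in one quadrant, say the first; the key geometric fact is that a disc centred at $y$ with $y-x$ in the first quadrant which covers $x$ contains the entire coordinate rectangle with opposite corners $x$ and $y$. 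By the ergodic theorem this configuration occurs at some $x$ in the third quadrant with $y$ far to the north-east, which covers $[0,1]^2$; tiling the plane finishes the proof. If you want to salvage your conditional approach, the correct replacement for Hall's theorem is this observation that $\Leb\{x:\varphi(x)\ge|x|\}=\infty$ a.s.\ forces infinitely many covering discs with centres escaping to infinity, followed by the quadrant/ergodicity argument; the Fubini ``full measure'' step can then be discarded entirely.
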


\begin{proof}
We shall prove that~\emph{a)} implies~\emph{b)} implies~\emph{c)}. That~\emph{c)} implies~\emph{a)} is trivial. Assume hence that $\Pbf_\lambda(0\in\occ)=1$ for some $\lambda>0$. Using that the superpositioning of two independent Poisson processes is a Poisson process and Jensen's inequality, we obtain
$$
\Pbf_{2\lambda}(0\not\in\occ)\,=\,\overline\E\big[\Pr_\lambda(0\not\in\occ|\varphi)^2\big]\,\ge\,\Pbf_\lambda(0\not\in\occ)^2.
$$
A standard coupling argument shows that the probability $\Pbf_\lambda(0\in\occ)$ is monotone in $\lambda$. Hence, if $\Pbf_\lambda(0\not\in\occ)>0$ for some $\lambda>0$, then it will be for arbitrarily large values of $\lambda$ too. So we must have $\Pbf_\lambda(0\in\occ)=1$ for all $\lambda>0$.

Now assume that $\Pbf_\lambda(0\in\occ)=1$ for all $\lambda>0$. Then, for any compact set $K\subseteq\R^2$,
$$
1\,=\,\Pbf_\lambda(0\in\occ)\,\le\,\Pbf_\lambda\big(0\in\occ(\eta\cap K,\varphi)\big)+\Pbf_\lambda\big(0\in\occ(\eta\cap K^c,\varphi)\big).
$$
The first of the two probabilities can be made arbitrarily small by decreasing $\lambda$. Due to monotonicity in $\lambda$ we find that $\Pbf_\lambda\big(0\in\occ(\eta\cap K^c,\varphi)\big)\ge1-\eps$ for every $\eps>0$. That is, $\Pbf_\lambda\big(0\in\occ(\eta\cap K^c,\varphi)\big)=1$ for every compact set $K$. This shows that the origin is covered by arbitrarily large discs, and hence infinitely many, almost surely. Infinitely many of these points will belong to one of the four quadrants. Let $A$ denote the event that infinitely many discs with center in the first quadrant overlap the origin. We may assume, without loss of generality, that $\Pbf_\lambda(A)>0$. Let $A_x$ denote the translate of $A$ along the vector $x\in\R^2$. By the ergodic theorem, $A_x$ will occur for infinitely many $x$ in the third quadrant with probability one. Note that if $A_x$ occurs for some $x$ in the third quadrant, then $[0,1]^2$ is covered almost surely.
The conclusion now follows by tiling the plane by unit squares.
\end{proof}

\subsection{Intersection probabilities}

We show below that the probability of covering the origin is never larger in the presence of a random field than in the i.i.d.\ setting. In fact, we shall prove a more general statement, which compares the probabilities that a compact set is intersected by the disc of a Poisson point centered far away.
Probabilities of this kind are typically easy to bound in the i.i.d.\ setting, and the observation made here is that the i.i.d.\ setting dominates that of a stationary ergodic field in the following sense.

\begin{prop}\label{prop:g-comparison}
Let $K$ and $K'$ be compact subsets of $\R^2$. Then,
$$
\Pbf_\lambda\Big(\exists x\in\eta\setminus K':B(x,\varphi(x))\cap K\neq\emptyset\Big)\,\le\,\Pr_\lambda\Big(\exists (x,z)\in\eta\setminus K':B(x,\Phi^{-1}(z))\cap K\neq\emptyset\Big).
$$
\end{prop}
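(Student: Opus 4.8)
The plan is to compare the two random configurations by conditioning on the realization of the random field $\varphi$ and then integrating. Recall that $\eta$ is a Poisson process on $\R^2\times[0,1]$, so for a fixed function $\varphi$ the event on the left is determined by the positions $x$ of points $(x,z)\in\eta$ (the $z$-marks play no role), while the event on the right uses the marks $z$ through $\Phi^{-1}(z)$ and ignores $\varphi$ entirely. For a fixed $\varphi$, the set of points $(x,z)\in\eta\setminus K'$ whose disc $B(x,\varphi(x))$ meets $K$ is a thinning of $\eta$, and by the mapping/thinning theorem for Poisson processes the number of such points is Poisson with mean
$$
\mu_\varphi\,:=\,\lambda\int_{(\R^2\setminus K')\times[0,1]}\ind\{B(x,\varphi(x))\cap K\neq\emptyset\}\,dz\,dx\,=\,\lambda\int_{\R^2\setminus K'}\ind\{\dist(x,K)\le\varphi(x)\}\,dx.
$$
Hence $\Pr_\lambda\big(\exists x\in\eta\setminus K':B(x,\varphi(x))\cap K\neq\emptyset\,\big|\,\varphi\big)=1-e^{-\mu_\varphi}$, and taking expectation over $\varphi$ gives the left-hand side as $\overline\E[1-e^{-\mu_\varphi}]$.

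For the right-hand side, the same Poisson thinning argument (now with no conditioning, since $\varphi$ is irrelevant) shows that the number of points $(x,z)\in\eta\setminus K'$ with $B(x,\Phi^{-1}(z))\cap K\neq\emptyset$ is Poisson with mean
$$
\mu^{\mathrm{iid}}\,:=\,\lambda\int_{\R^2\setminus K'}\int_0^1\ind\{\dist(x,K)\le\Phi^{-1}(z)\}\,dz\,dx\,=\,\lambda\int_{\R^2\setminus K'}\overline\Pr\big(\varphi(0)\ge\dist(x,K)\big)\,dx,
$$
using that $\Phi^{-1}(z)\ge r \iff z\ge\Phi(r^-)$ together with the definition of $\Phi$, so that $\Leb\{z\in[0,1]:\Phi^{-1}(z)\ge r\}=\overline\Pr(\varphi(0)\ge r)$ (modulo the usual care at atoms, which the footnote in the excerpt already addresses). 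Thus the right-hand side equals $1-e^{-\mu^{\mathrm{iid}}}$.

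It remains to show $\overline\E[1-e^{-\mu_\varphi}]\le 1-e^{-\mu^{\mathrm{iid}}}$. By stationarity of $\varphi$, for each fixed $x$ we have $\overline\Pr(\dist(x,K)\le\varphi(x))=\overline\Pr(\dist(x,K)\le\varphi(0))$, so by Tonelli $\overline\E[\mu_\varphi]=\mu^{\mathrm{iid}}$. Since $t\mapsto 1-e^{-t}$ is concave, Jensen's inequality yields $\overline\E[1-e^{-\mu_\varphi}]\le 1-e^{-\overline\E[\mu_\varphi]}=1-e^{-\mu^{\mathrm{iid}}}$, which is exactly the claimed inequality. (If $\mu^{\mathrm{iid}}=\infty$ the right-hand side is $1$ and there is nothing to prove.) The one point requiring a little care — and the only genuine obstacle — is the measure-theoretic bookkeeping: justifying that the relevant indicator functions are jointly measurable in $(x,z,\varphi)$ so Tonelli applies, and that $\Leb\{z:\Phi^{-1}(z)\ge r\}=\overline\Pr(\varphi(0)\ge r)$ holds for all $r$ including atoms of $\Phi$; both are routine given the remarks on measurability already made in the excerpt, and the concavity step is the substance of the argument.
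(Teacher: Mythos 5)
Your proof is correct and follows essentially the same route as the paper's: condition on $\varphi$, use Poisson thinning to express both non-intersection probabilities as exponentials of integrated intensities, match the expected intensities via stationarity and Tonelli, and conclude by Jensen's inequality applied to the convex map $t\mapsto e^{-t}$ (equivalently, your concave $1-e^{-t}$). The only cosmetic difference is that you phrase the intersection condition as $\dist(x,K)\le\varphi(x)$ and spell out the edge cases ($\mu^{\mathrm{iid}}=\infty$, atoms of $\Phi$), which the paper leaves implicit.
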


\begin{proof}
Let
\begin{equation*}
\begin{aligned}
\bar\eta&:=\big\{x\in\eta\setminus K':B(x,\varphi(x))\cap K\neq\emptyset\big\},\\
\bar\eta_\Phi&:=\big\{(x,z)\in\eta\setminus K':B(x,\Phi^{-1}(z))\cap K\neq\emptyset\big\}.
\end{aligned}
\end{equation*}
The projection of $\bar\eta_\Phi$ onto $\R^2$ is a thinned Poisson point process with density function $x\mapsto\lambda\,\Pr\big(B(x,\Phi^{-1}(U))\cap K\neq\emptyset\big)\ind_{\{x\not\in K'\}}$, where $U$ is uniform on $[0,1]$. Hence, $|\bar\eta_\Phi|$ is Poisson distributed with parameter (given that it is finite)
$$
\lambda\int_{\R^2\setminus K'}\Pr\big(B(x,\Phi^{-1}(U))\cap K\neq\emptyset\big)\,dx.
$$
In particular, $\Pr_\lambda(\bar\eta_\Phi=\emptyset)=\exp\big(-\lambda\int_{\R^2\setminus K'}\Pr(B(x,\Phi^{-1}(U))\cap K\neq\emptyset)\,dx\big)$.

Conditioned on $\varphi$ also $\bar\eta$ is a thinned Poisson point process, this time with density
$$
x\mapsto\lambda\ind_{\{x\in\R^2\setminus K':B(x,\varphi(x))\cap K\neq\emptyset\}}.
$$
Hence, the conditional law of $|\bar\eta|$, given the field $\varphi$, is Poisson distributed with parameter $\lambda\int_{\R^2\setminus K'}\ind_{\{B(x,\varphi(x))\cap K\neq\emptyset\}}\,dx$, and hence
$$
\Pr_\lambda(\bar\eta=\emptyset|\varphi)\,=\,\exp\bigg(-\lambda\int_{\R^2\setminus K'}\ind_{\{B(x,\varphi(x))\cap K\neq\emptyset\}}\,dx\bigg).
$$
Using Jensen's inequality and Fubini's theorem we may thus deduce that
$$
\Pbf_\lambda(\bar\eta=\emptyset)\,=\,\overline\E\big[\Pr_\lambda(\bar\eta=\emptyset|\varphi)\big]\,\ge\,\exp\bigg(-\lambda\int_{\R^2\setminus K'}\overline\Pr\big(B(x,\varphi(x))\cap K\neq\emptyset\big)\,dx\bigg).
$$
However, the right-hand side coincides with $\Pr_\lambda(\bar\eta_\Phi=\emptyset)$, as required.
\end{proof}

Proposition~\ref{prop:g-comparison} has several interesting consequences:
\begin{enumerate}[label=\itshape(\roman*)]
\item For $K=\{0\}$ and $K'=\emptyset$ the statement of the proposition is reduced to
$$
\Pbf_\lambda(0\in\occ)\le\Pr_\lambda(0\in\iidocc),
$$
which is the statement of~\eqref{eq:point-coverage}. From the proof we obtain the well-known expression
$$
\Pr_\lambda(0\in\iidocc)=1-\exp(-2\pi\lambda\E[\Phi^{-1}(U)^2]).
$$
\item The expected Lebesgue measure of points in a unit square covered at intensity $\lambda$ is given by $\Pbf_\lambda(0\in\occ)$ as a consequence of Fubini's theorem. The \emph{covered volume fraction}, defined as the almost sure limit
$$
\lim_{n\to\infty}\frac{1}{4n^2}\int_{[-n,n]^2}\ind_{\{x\in\occ\}}\,dx,
$$
is by the ergodic theorem equal to $\Pbf_\lambda(0\in\occ)$, and is thus maximized for i.i.d.\ radii.
\item Let $K=B_\infty(0,n)$ and $K'=B_\infty(0,(1+\eps/4)n)$. When $\Phi$ has finite second moment it is in the i.i.d.\ setting straightforward to check that
$$
\Pr_\lambda\big(\exists (x,z)\in\eta\setminus K':B(x,\Phi^{-1}(z))\cap K\neq\emptyset\big)\to0\quad\text{as }n\to\infty.
$$
Consequently, finite second moment of the marginal distribution of $\varphi$ is sufficient for the decay of the spatial correlations measured by $\pi_\lambda(n)$. That is, for any $\lambda>0$,
$$
\overline\E[\varphi(0)^2]<\infty\quad\Rightarrow\quad\lim_{n\to\infty}\pi_\lambda(n)=0.
$$
\end{enumerate}

\subsection{Measures of spatial correlations}

We described in the introduction two measures on spatial correlations, one measuring the spatial correlation in the random field and another which takes into account the correlations induced by large values of the field. This distinction is often useful, but we shall here briefly discuss an alternative notion of correlations that combines the two into one. This notion may be more natural in other instances.

The measure $\Pbf_\lambda$ induces a measure on subsets of $\R^2$, and the associated measurable space $(\widetilde\Omega,\widetilde\Fc)$, via the canonical projections $Y_x=\ind_{\{x\in\occ\}}$, for $x\in\R^2$. We define an associated measure on spatial correlations as follows: Fix $\eps_0\in(0,1/5]$ as above. For every $n\ge1$, let
$$
\rho_\lambda(n):=\sup_{f_1,f_2}\big|\Ebf_\lambda[f_1f_2]-\Ebf_\lambda[f_1]\Ebf_\lambda[f_2]\big|,
$$
where the supremum is taken over all functions $f_1,f_2:\widetilde\Omega\to[0,1]$ such that $f_1\in\sigma\big(Y_x:x\in B_\infty(0,n)\big)$ and $f_2\in\sigma\big(Y_x:x\in((2+\eps_0)n,0)+B_\infty(0,n)\big)$.

The spatial correlations measured by $\rho_\lambda$ come up naturally in many instances. However, there are occasionally advantages in formulating an hypothesis in terms of $\pi_\lambda$ and $\overline\pi$ instead of $\rho_\lambda$. For instance, it is unclear how $\rho_\lambda$ depends on $\lambda$, whereas $\pi_\lambda$ is clearly increasing in $\lambda$.
Given a bound on $\pi_\lambda$ and $\overline\pi$, it transforms into a bound on $\rho_\lambda$ as follows.

\begin{prop}\label{prop:cor-bound}
For every $\lambda>0$ and $n\ge1$ we have $\rho_\lambda(n)\le 4\pi_\lambda(n)+\overline\pi(n)$.
\end{prop}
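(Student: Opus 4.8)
The plan is to localize each of the two functions $f_1,f_2$ appearing in the definition of $\rho_\lambda(n)$ to a slightly enlarged box, at a cost controlled by $\pi_\lambda(n)$, and then to observe that the localized functions become conditionally independent given the field $\varphi$; this reduces the remaining covariance to a covariance between two functions of $\varphi$ supported on well-separated boxes, which is exactly the quantity bounded by $\overline\pi(n)$.

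In detail, set $K=B_\infty(0,n)$ and $K'=B_\infty(0,(1+\eps_0/4)n)$, and let $\hat K=((2+\eps_0)n,0)+K$ and $\hat K'=((2+\eps_0)n,0)+K'$ be the corresponding boxes around the second centre. Given admissible $f_1,f_2$ (so $f_1$ is a $[0,1]$-valued function of $(Y_x:x\in K)$ and $f_2$ of $(Y_x:x\in\hat K)$), I would define $\tilde f_1$ by feeding $f_1$ the indicators $\ind_{\{x\in\occ_{K'}\}}$ in place of $\ind_{\{x\in\occ\}}$, and $\tilde f_2$ by feeding $f_2$ the indicators $\ind_{\{x\in\occ_{\hat K'}\}}$. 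Then $|f_1-\tilde f_1|\le\ind_{\{\occ\cap K\neq\occ_{K'}\cap K\}}$, so $\Ebf_\lambda[|f_1-\tilde f_1|]\le\pi_\lambda(n)$, and by stationarity of $\Pbf_\lambda$ also $\Ebf_\lambda[|f_2-\tilde f_2|]\le\Pbf_\lambda(\occ\cap\hat K\neq\occ_{\hat K'}\cap\hat K)=\pi_\lambda(n)$. Two applications of the triangle inequality, using $|f_i|,|\tilde f_i|\le1$, give
\[
\big|\Ebf_\lambda[f_1f_2]-\Ebf_\lambda[f_1]\Ebf_\lambda[f_2]\big|\ \le\ 4\pi_\lambda(n)+\big|\Ebf_\lambda[\tilde f_1\tilde f_2]-\Ebf_\lambda[\tilde f_1]\Ebf_\lambda[\tilde f_2]\big|.
\]
Next I would note that $\tilde f_1$ depends on $(\eta,\varphi)$ only through the Poisson points lying in $K'$ and the values of $\varphi$ at those points (all in $K'$), and likewise $\tilde f_2$ only through $\eta$ and $\varphi$ on $\hat K'$. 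Since $\eps_0>0$, the boxes $K'$ and $\hat K'$ are disjoint (their $x$-coordinates are separated by $(\eps_0/2)n$), so conditionally on $\varphi$ the Poisson process on $K'$ and the Poisson process on $\hat K'$ are independent; hence $\tilde f_1$ and $\tilde f_2$ are conditionally independent given $\varphi$. Setting $g_i:=\Ebf_\lambda[\tilde f_i\mid\varphi]$, each $g_i$ is $[0,1]$-valued, and because $\eta$ is independent of $\varphi$ the function $g_1$ is $\sigma(\{\varphi(x):x\in K'\})$-measurable and $g_2$ is $\sigma(\{\varphi(x):x\in\hat K'\})$-measurable, which are precisely the sigma-algebras in the definition of $\overline\pi$. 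Therefore $\Ebf_\lambda[\tilde f_1\tilde f_2]=\overline\E[g_1g_2]$ and $\Ebf_\lambda[\tilde f_i]=\overline\E[g_i]$, so $|\Ebf_\lambda[\tilde f_1\tilde f_2]-\Ebf_\lambda[\tilde f_1]\Ebf_\lambda[\tilde f_2]|\le\overline\pi(n)$; combining with the display above and taking the supremum over $f_1,f_2$ finishes the argument.

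No step here is deep; the only point requiring care is the bookkeeping of sigma-algebras. One must verify that replacing $\occ$ by $\occ_{K'}$ genuinely confines the dependence of $\tilde f_1$ to the box $K'$ (both the relevant Poisson points and the field values entering the construction lie in $K'$), that the enlarged boxes around the two centres remain disjoint — which is exactly where $\eps_0>0$ is used, the hypothesis $\eps_0\le1/5$ leaving ample slack — and that integrating out the Poisson process leaves $g_1,g_2$ as functions of $\varphi$ alone on the respective boxes, so that the definition of $\overline\pi$ applies verbatim. The substance of the proposition lies entirely in arranging these localization and independence observations so that the error terms $4\pi_\lambda(n)$ and $\overline\pi(n)$ emerge cleanly.
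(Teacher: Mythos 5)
Your proposal is correct and follows essentially the same route as the paper's proof: localize $f_1,f_2$ to $\occ_{K'}$ and $\occ_{\hat K'}$ at a cost of $2\pi_\lambda(n)$ for each of the product and the product-of-expectations terms, then use conditional independence given $\varphi$ on the disjoint enlarged boxes to reduce the remaining covariance to one between $[0,1]$-valued functions of the field, bounded by $\overline\pi(n)$. The only cosmetic difference is that you phrase the localization error via the pointwise bound $|f_i-\tilde f_i|\le\ind_{G_i^c}$ and stationarity, where the paper argues directly on the event $G_i$; the content is identical.
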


\begin{proof}
Fix $n\ge1$, and let $K_1=B_\infty(0,n)$ and $K_2=((2+\eps_0)n,0)+K_1$. Let $f_1,f_2:\widetilde\Omega\to[0,1]$ be two functions such that $f_1\in\sigma(Y_x:x\in K_1)$ and $f_2\in\sigma(Y_x:x\in K_2)$. Also, let $K_1'=B_\infty(0,(1+\eps_0/4)n)$ and $K_2'=((2+\eps_0)n,0)+K_1'$, and let $G_i=\{\occ\cap K_i=\occ_{K_i'}\cap K_i\}$. On the event $G_i$ we have $f_i(\occ)=f_i(\occ_{K_i'})$, so
$$
\big|\Ebf_\lambda[f_1(\occ)f_2(\occ)]-\Ebf_\lambda[f_1(\occ_{K_1'})f_2(\occ_{K_2'})]\big|\le2\pi_\lambda(n),
$$
and similarly $\big|\Ebf_\lambda[f_1(\occ)]\Ebf_\lambda[f_2(\occ)]-\Ebf_\lambda[f_1(\occ_{K_1'})]\Ebf_\lambda[f_2(\occ_{K_2'})]\big|\le2\pi_\lambda(n)$.

Since $K_1'$ and $K_2'$ are disjoint, $f_1(\occ_{K_1'})$ and $f_2(\occ_{K_2'})$ are conditionally independent given $\varphi$, so that
$$
\E_\lambda[f_1(\occ_{K_1'})f_2(\occ_{K_2'})|\varphi]=\E_\lambda[f_1(\occ_{K_1'})|\varphi]\E_\lambda[f_2(\occ_{K_2'})|\varphi],
$$
almost surely. The two factors of the right-hand side are $[0,1]$-valued variables on $\overline\Omega$, so by definition of $\overline\pi(n)$ we obtain
$$
\Big|\overline\E\big[\E_\lambda[f_1(\occ_{K_1'})|\varphi]\E_\lambda[f_2(\occ_{K_2'})|\varphi]\big]-\Ebf_\lambda[f_1(\occ_{K_1'})]\Ebf_\lambda[f_2(\occ_{K_2'})]\Big|\le\overline\pi(n).
$$
Summing up the error estimates, via the triangle inequality, leaves us with
$$
\rho_\lambda(n)=\sup_{f_1,f_2}\big|\Ebf_\lambda[f_1(\occ)f_2(\occ)]-\Ebf_\lambda[f_1(\occ)]\Ebf_\lambda[f_2(\occ)]\big|\le4\pi_\lambda(n)+\overline\pi(n),
$$
as required.
\end{proof}

Note that when the plane is almost surely completely covered, then $\rho_\lambda(n)=0$ but $\pi_\lambda(n)=1$ for all $\lambda>0$ and $n\ge1$. Nevertheless, we would still expect that decay of correlations in the sense measured by $\rho_\lambda$, should typically imply some sort of control on the correlations captures by $\pi_\lambda$ and $\overline\pi$.

\begin{quest}
In what sense does a converse to Proposition~\ref{prop:cor-bound} hold?
\end{quest}


\section{Coverage probabilities}\label{sec:cylinder}

From Proposition~\ref{prop:g-comparison} we learn that i.i.d.\ assignment of radii maximize intersection probabilities. When it comes to coverage probabilities the situation is different. We shall in this section study a random field constructed from a Poisson cylinder process in the plane, and see that coverage probabilities may for this field both exceed and be inferior to the i.i.d.\ setting with the same marginal distribution.

A Poisson line process $Y$ of intensity $u$ in ${\mathbb R}^2$ may be constructed in the following standard fashion: Let $\ell_{\theta}$ be the half-line emanating from the origin at angle $\theta$ with the first coordinate axis. For $(\theta,x)\in [0,2\pi)\times [0,\infty)$ let $\ell(\theta,x)$ be the line perpendicular to $\ell_{\theta}$ that intersects $\ell_\theta$ at distance $x$ from the origin. Next, consider a homogeneous Poisson point process $X$ on $[0,2\pi)\times [0,\infty)$ with intensity $u>0$. The Poisson line process $Y$ is obtained by interpreting $X$ as the random collection of lines $\{\ell(\theta,x):(\theta,x)\in X\}$. Similarly we obtain a marked Poisson line process $Y=\{(\ell(\theta,x),z):(\theta,x,z)\in X\}$ from a Poisson point process $X$ on $[0,2\pi)\times[0,\infty)\times[0,1]$ with intensity $u$.

For $\ell\in Y$, let $C(\ell)$ denote the bi-infinite solid closed cylinder with base-radius $r>0$ centered around the infinite line $\ell$, and set
$$
{\mathcal L}:=\bigcup_{(\ell,z)\in Y}C(\ell).
$$
Let $F$ be the distribution function of some probability measure on $[0,\infty)$. We define a random field $\varphi:\R^2\to[0,\infty)$ by letting $\varphi(x)=0$ if $x\not\in {\mathcal L}$, and $\varphi(x)=\inf\{F^{-1}(z):(\ell,z)\in Y,x\in C(\ell)\}$ if $x\in {\mathcal L}$. For each $F$ we obtain a a family of random fields with two parameters $u$ and $r$.

Geometric and percolative properties of the random set ${\mathcal L}$ and its complement have been studied in dimensions three and higher in~\cite{tykwin12},~\cite{hilsidtei15} and~\cite{brotyk16}. There, the process is referred to as the \emph{Poisson cylinder model}. A characteristic feature of this model is that it exhibits long-range spatial dependence, see~\cite[Lemma~3.1]{tykwin12} for details.

It is easy to see for every $u>0$ and $r>0$ that the marginal distribution $\Phi$ of the random field $\varphi$ has finite moment of a given order if and only if $F$ does. More precisely, if $U$ is uniform on $[0,1]$ and $\alpha>0$, then
$$
\overline\Pr\big(\text{exactly one cylinder contains }0\big)\E[F^{-1}(U)^\alpha]\,\le\,\overline\E[\varphi(0)^\alpha]\,\le\,\E[F^{-1}(U)^\alpha],
$$
assuming the moments exist. The following proposition shows that for every $\alpha>0$ there exists a random field with $\overline\E[\varphi(0)^{1+\alpha}]=\infty$, but for which $\Pbf_\lambda(\occ=\R^2)=0$ for all $\lambda>0$.

\begin{prop}\label{prop:plane-coverage}
Assume that $F$ has finite mean. Then, for any $u>0$ and $r>0$ we have
$$
\Pbf_\lambda(0\in \occ)<1.
$$
\end{prop}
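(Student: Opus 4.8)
The plan is to establish the equivalent statement $\Pbf_\lambda(0\notin\occ)>0$ by conditioning on the random field and invoking the Poisson void probability, in the spirit of the proof of Proposition~\ref{prop:g-comparison}. Observe first that the disc attached to a Poisson point $(x,z)\in\eta$ has radius $\varphi(x)$, which does not involve the auxiliary coordinate $z$; hence $0\in\occ$ precisely when the projection of $\eta$ onto $\R^2$ places a point in the $\varphi$-measurable set $A:=\{x\in\R^2:|x|\le\varphi(x)\}$. Since $\eta$ is independent of $\varphi$, conditioning on $\varphi$ and using that the projected process is Poisson of intensity $\lambda$ gives
$$
\Pbf_\lambda(0\notin\occ)=\overline\E\big[\exp(-\lambda\Leb(A))\big],
$$
so it suffices to show that $\Leb(A)<\infty$ holds $\overline\Pr$-almost surely: the integrand is then almost surely strictly positive (and bounded by $1$), whence the expectation is positive.

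Next I would localise $A$ to the cylinders carrying large marks. If $x\ne0$ satisfies $|x|\le\varphi(x)$, then $\varphi(x)>0$, so $x\in{\mathcal L}$, and by definition of $\varphi$ every cylinder $C(\ell)\ni x$ has a mark obeying $F^{-1}(z)\ge\varphi(x)\ge|x|$. Fixing one such cylinder shows
$$
A\subseteq\{0\}\cup\bigcup_{(\ell,z)\in Y}\Big(C(\ell)\cap B\big(0,F^{-1}(z)\big)\Big),
$$
and each set in the union has Lebesgue measure at most $\pi F^{-1}(z)^2<\infty$. Thus $\Leb(A)$ is dominated by a sum over $Y$ of finite quantities, and everything comes down to seeing that only finitely many of these are nonzero.

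The crucial step is the observation that a cylinder of base-radius $r$ around a line at distance $x$ from the origin meets $B(0,F^{-1}(z))$ only when $x\le F^{-1}(z)+r$. Writing the marked Poisson line process in the standard coordinates $(\theta,x)\in[0,2\pi)\times[0,\infty)$ with marks $z\in[0,1]$ and intensity $u$, and using that $F^{-1}(U)$ has distribution $F$, the expected number of cylinders $(\ell,z)\in Y$ meeting this condition equals
$$
2\pi u\int_0^\infty\Pr\big(F^{-1}(U)\ge x-r\big)\,dx=2\pi u\Big(r+\int_0^\infty\Pr(X\ge y)\,dy\Big)=2\pi u\big(r+\E[X]\big),
$$
where $X\sim F$. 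This is finite exactly because $F$ has finite mean, so $\overline\Pr$-almost surely only finitely many cylinders contribute to the union above; hence $\Leb(A)$ is a finite sum of finite quantities and is almost surely finite, as required.

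I expect the only real pitfall to be the temptation to bound $\E[\Leb(A)]$ directly. A crude estimate of the area of $C(\ell)\cap B(0,F^{-1}(z))$ (for instance by a multiple of $r\,F^{-1}(z)$) leads, after integrating against the line measure, to an integral of $F^{-1}(z)^2$, which diverges under a mere first-moment assumption; indeed $\E[\Leb(A)]$ may genuinely be infinite. The point is rather that the \emph{relevance} of a cylinder is governed by the condition $F^{-1}(z)\ge\dist(0,\ell)-r$, whose integral against the line measure is controlled by $\E[X]$ rather than $\E[X^2]$, so that almost-sure finiteness survives even when the expectation does not.
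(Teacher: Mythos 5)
Your argument is correct. The decisive computation --- that the expected number of cylinders $(\ell,z)\in Y$ with $\dist(0,\ell)\le F^{-1}(z)+r$ equals $2\pi u\big(r+\E[F^{-1}(U)]\big)$, finite precisely under the first-moment assumption on $F$ --- is exactly the one the paper's proof rests on. Where you differ is in how this is converted into the conclusion. The paper dominates $\occ$ by the union of enlarged cylinders $\widetilde\occ:=\bigcup_{(\ell,z)\in Y}C(\ell,F^{-1}(z)+r)$, notes that the number of enlarged cylinders containing the origin is Poisson with the above mean, and reads off $\Pbf_\lambda(0\in\occ)\le 1-\exp\big(-2\pi u(r+\E[F^{-1}(U)])\big)<1$ directly; this bound involves only the line process and is uniform in $\lambda$, which the paper explicitly remarks on. You instead condition on $\varphi$, write $\Pbf_\lambda(0\notin\occ)=\overline\E[\exp(-\lambda\Leb(A))]$ with $A=\{x:|x|\le\varphi(x)\}$, and reduce to showing $\Leb(A)<\infty$ almost surely; your inclusion $A\setminus\{0\}\subseteq\bigcup_{(\ell,z)\in Y}\big(C(\ell)\cap B(0,F^{-1}(z))\big)$ and the almost-sure finiteness of the number of contributing cylinders are both correct, so the route is sound, just a step longer. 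Your closing remark is also on point: one must argue almost-sure finiteness of $\Leb(A)$ rather than finiteness of $\E[\Leb(A)]$, since the latter is governed by the second moment of the marginal of $\varphi$ and may genuinely be infinite here. The only thing you give up relative to the paper's packaging is the explicit, $\lambda$-independent lower bound on the vacancy probability.
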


\begin{proof}
We make the initial observation that
$$
\occ \subseteq \widetilde{\occ}:= \bigcup_{(\ell,z)\in Y}C(\ell,F^{-1}(z)+r).
$$
We also observe that the number of lines $(\ell,z)\in Y$ such that $0\in C(\ell,F^{-1}(z)+r)$ is a Poisson random variable with mean
$$
2\pi u\int_{0}^{\infty} \Pr\big(F^{-1}(U)\ge t-r\big)\,dt\,=\,2\pi\Big(ur+u\int_{0}^{\infty}\big(1-F(t)\big)\,dt\Big)\,=\,2\pi u\big(r+\E[F^{-1}(U)]\big),
$$
where $U$ is again uniform on $[0,1]$. Hence,
$$
\Pbf_{\lambda}(0\in \occ)\,\le\, \Pbf_{\lambda}(0\in \widetilde{\occ})\,=\,1-\exp\big(-2\pi u(r+\E[F^{-1}(U)])\big)\,<\,1,
$$
whenever $F$ has finite mean. Note that this bound does not depend on $\lambda$.
\end{proof}

When $F$ has infinite mean, then the origin is covered with probability one in the above model. It seems plausible that this is a consequence of a more general fact.

\begin{quest}
Does $\overline\E[\varphi(0)]=\infty$ imply that $\Pbf_\lambda(\occ=\R^2)=1$?
\end{quest}

Next, we obtain an example of a field for which the probability of containing a line segment is larger with geostatistical marking than for i.i.d.\ assignment of radii with the same marginal distribution. Let $L_s$ be the line segment between $(0,0)$ and $(s,0)$.

\begin{prop}\label{prop:line-coverage}
Let $r=2$ and let $F$ correspond to a point mass at 1. Then there exists $u_0>0$ such that for every $u\in (0,u_0)$ there is $s_0=s_0(u)>0$ such that
$$
\Pbf_{\lambda}(L_s\subseteq \occ)> {\mathbb P}_{\lambda}(L_s\subseteq \occ_{\Phi})\quad\text{for all }s\ge s_0.
$$
\end{prop}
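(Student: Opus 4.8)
The plan is to exploit the very different geometry of the two models in this example: a single strip of $\mathcal{L}$ that runs nearly parallel to $L_s$ acts as a ``corridor'' along which the Poisson process alone already produces a covered segment, and the probability of this decays only polynomially in $s^{-1}$ times an exponential factor whose rate does \emph{not} depend on $u$; whereas in the i.i.d.\ model the corresponding covering probability decays exponentially at a rate that diverges as $u\to0$. So for $u$ small and $s$ large the marking model wins.

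First I would record the explicit form of the two models. With $r=2$ and $F$ a point mass at $1$ we have $\varphi\equiv1$ on $\mathcal{L}=\bigcup_{(\ell,z)\in Y}C(\ell)$ and $\varphi\equiv0$ off $\mathcal{L}$, and $\Phi$ places mass $p:=1-e^{-4\pi u}$ at $1$ and mass $1-p$ at $0$. Hence $\occ_\Phi$ is a union of radius-$1$ discs centred at the points of a Poisson process of intensity $\lambda p$, while $\occ$ is a union of radius-$1$ discs centred at those points of $\eta$ that fall in $\mathcal{L}$. For the i.i.d.\ side I would then note that the discs $B((4k+2,0),1)$, $k=0,\dots,\lfloor s/4\rfloor-1$, are pairwise disjoint and each must contain a point of the thinned process, so that by independence of Poisson counts on disjoint sets
$$
\Pr_\lambda\big(L_s\subseteq\occ_\Phi\big)\le\big(1-e^{-\pi\lambda p}\big)^{\lfloor s/4\rfloor}.
$$

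For the marking side I would keep only one favourable scenario. Let $S$ be the $\tfrac12$-neighbourhood of $L_s$ and let $\mathcal{A}_s$ be the event that some strip of $\mathcal{L}$ contains $S$. Since $C(\ell(\theta,x))\supseteq S$ whenever $|t\cos\theta-x|\le r-\tfrac12=\tfrac32$ for all $t\in[0,s]$, a direct computation with the line-process parametrisation shows that the mean number of strips $\ell$ with $C(\ell)\supseteq S$ is at least $c_0 u/s$ (two angular windows of width of order $1/s$ about the horizontal directions, a position window of fixed length), so that $\overline\Pr(\mathcal{A}_s)\ge c_0 u/(2s)$ for all large $s$; moreover $\mathcal{A}_s$, depending only on the line process, is independent of $\eta$. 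On $\mathcal{A}_s$ every point of $S$ lies in $\mathcal{L}$ and therefore carries radius $1$; hence if in addition each of the disjoint discs $B((j,0),\tfrac12)$, $j=0,\dots,\lceil s\rceil$, contains a point of $\eta$ — such a point lies in $S\subseteq\mathcal{L}$ and its radius-$1$ disc covers $B((j,0),\tfrac12)$ — then $L_s\subseteq\occ$. Using independence of the line process and $\eta$,
$$
\Pbf_\lambda\big(L_s\subseteq\occ\big)\ge\frac{c_0 u}{2\lceil s\rceil}\,\big(1-e^{-\pi\lambda/4}\big)^{\lceil s\rceil+1}.
$$

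Comparing the two displays and taking logarithms, the polynomial prefactor is negligible, and for large $s$ the desired inequality reduces to $(1-e^{-\pi\lambda/4})^{4}>1-e^{-\pi\lambda p}$. Since $p=1-e^{-4\pi u}\to0$ as $u\to0$, the right-hand side tends to $0$ while the left-hand side remains a fixed positive number; so there is $u_0>0$ (depending on $\lambda$) for which this holds for every $u\in(0,u_0)$, and one then takes $s_0=s_0(u)$ large enough to absorb the prefactor. The step I expect to be the real obstacle is exactly this comparison of exponential rates: the corridor mechanism contributes only a sub-exponential factor, so one is genuinely comparing the rate for covering $L_s$ by the full-intensity disc process with that for the thinned, intensity-$\lambda p$ process, and the crude disjoint-disc bounds above close this gap only once $p$ is small — which is precisely the role of the hypothesis $u<u_0$. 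More careful coverage estimates would presumably remove the smallness assumption, but are not needed for the statement as given.
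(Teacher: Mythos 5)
Your argument is correct and is essentially the paper's: both condition on a single cylinder running nearly parallel to $L_s$ (an event of probability of order $u/s$, independent of $\eta$), on which the marking model covers $L_s$ like the full-intensity unit-radius Gilbert model, and compare with the i.i.d.\ model, which is a unit-radius model thinned to intensity $\lambda p$ with $p=1-e^{-4\pi u}\to0$; where the paper quotes \cite{tykwin12} and \cite{benjonschtyk09} for the corridor probability and the exponential rate of segment coverage, you give elementary disjoint-disc bounds, which suffice. The only (harmless) slip is that $B((\lceil s\rceil,0),\tfrac12)$ may protrude beyond the $\tfrac12$-neighbourhood $S$ when $s\notin\Z$, so its $\eta$-point need not lie in $\mathcal L$; spacing the $\lceil s\rceil+1$ disc centres equally along $L_s$ fixes this.
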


\begin{proof}
Let $A_s$ be the event that there is a line in $Y$ intersecting $B(0,1)$ and $B((0,s),1)$. A straightforward calculation, see~\cite[Lemma~3.1]{tykwin12}, shows that for some $c>0$ we have
\begin{equation}\label{e.arbound}
\overline\Pr_u(A_s)\ge \frac cs\,\overline\Pr_u(0\in {\mathcal L})
\end{equation}
uniformly in $u$. Observe that on $A_s$, the 1-neighborhood of $L_s$ is contained in ${\mathcal L}$. Hence, conditioned on $A_s$, the probability that $L_s$ is contained in $\occ(\eta,\varphi)$ equals the probability that $L_s$ is contained in the occupied set in Gilbert's model with unit radii. That is,
\begin{equation}\label{e.areq4}
\Pbf_{\lambda}(L_s\subseteq \occ | A_s)={\mathbb P}_{\lambda}(L_s\subseteq \occ_F).
\end{equation}

Another calculation, see~\cite[Lemma~3.4]{benjonschtyk09}, shows that there is a continuous function $\alpha:(0,\infty)\to(0,\infty)$ with the properties that $\alpha(\lambda)\to 0$ as $\lambda\to \infty$, $\alpha(\lambda)\to \infty$ as $\lambda\to 0$, and such that
\begin{equation}\label{e.areq3}
c' \exp(-\alpha(\lambda) s)\, \le\, {\mathbb P}_{\lambda}(L_s\subseteq \occ_F)\, \le\, \frac{1}{c'}\exp(-\alpha(\lambda) s)
\end{equation}
for some $c'>0$. Combining~\eqref{e.arbound},~\eqref{e.areq4} and~\eqref{e.areq3} we obtain that
\begin{equation}\label{e.b1}
\Pbf_{\lambda}(L_s\subseteq \occ)\,\ge\,  \Pbf_{\lambda}(L_s\subseteq \occ|A_s)\overline\Pr_u(A_s)\,\ge\, \frac{cc'}{s} \exp(-\alpha(\lambda) s) \overline\Pr_u(0\in {\mathcal L}).
\end{equation}

We now observe that the law of $\occ_{\Phi}(\eta)$ equals that of Gilbert's model with unit radii at density $\lambda'(u):=\lambda \overline\Pr_u(0\in {\mathcal L})$. That is,
\begin{equation}\label{e.b2}
{\mathbb P}_{\lambda}(L_s\subseteq\occ_{\Phi})\,=\,{\mathbb P}_{\lambda'}(L_s\subseteq \occ_F)\,\le\,\frac{1}{c'} \exp(-\alpha(\lambda') s),
\end{equation}
where the upper bound again is due to~\eqref{e.areq3}. Since $\lambda'(u)\to 0$ as $u\to 0$, we can choose $u_0$ small so that $\alpha(\lambda'(u))>\alpha(\lambda)$ for all $u\le u_0$. Combining~\eqref{e.b1} and~\eqref{e.b2}, we see that for some $s_0=s_0(u)$ we have
$$
{\mathbb P}_{\lambda}(L_s\subseteq\occ_{\Phi})<\Pbf_{\lambda}(L_s\subseteq \occ)
$$
for all $s\ge s_0$, as required.
\end{proof}

\section{Non-triviality of the critical threshold}\label{sec:nontriviality}

The aim of this section is to prove Theorem~\ref{thm:nontrivial}. The first step will be to derive a so-called `finite-size' criterion, which is a technique developed in the early 1980s, see~\cite{russo81,kesten82,aizchachafrorus83}, but also~\cite{ahltastei18a} for a recent example. We assume throughout this section that the law of the random field $\varphi$ is invariant with respect to rotations by right angles. 
Since the random field is invariant with respect to translations and rotations by right angles, so is $\cross(s,t)$. This will be used repeatedly.

\begin{prop}\label{prop:finite-size}
Suppose that the law of the random field is invariant with respect to rotations by right angles. Let $I\subseteq[0,\infty)$ be some interval and assume that $\rho_\lambda(n)\to0$ as $n\to\infty$ uniformly over $\lambda\in I$. Then, there exists $N=N(I)$ such that for every $\lambda\in I$ the following are true:
\begin{enumerate}[label=\itshape\alph*)]
\item If there exists $n\ge N$ such that $\Pbf_\lambda(\cross(3n,n))>1-1/200$, then
$$
\lim_{n\to\infty}\Pbf_\lambda(\cross(3n,n))=1.
$$
\item If there exists $n\ge N$ such that $\Pbf_\lambda(\cross(n,3n))<1/200$, then
$$
\lim_{n\to\infty}\Pbf_\lambda(\cross(n,3n))=0.
$$
\end{enumerate}
\end{prop}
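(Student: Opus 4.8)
The plan is to run the now‑standard finite‑size renormalisation scheme, the one twist being that the decoupling of well‑separated regions is carried out through the correlation functional $\rho_\lambda$ rather than through genuine independence. Write $q_\lambda(n):=1-\Pbf_\lambda(\cross(3n,n))$; by invariance under translations and right‑angle rotations every translate or right‑angle rotate of $\cross(3n,n)$ has probability $1-q_\lambda(n)$, and crossing events of the complementary set $\R^2\setminus\occ$ are likewise invariant in law. The heart of the argument is a one‑step estimate: there exist an integer scale factor $k$ and constants $c,C>0$, depending only on $k$, such that
$$
q_\lambda(kn)\ \le\ C\,q_\lambda(n)^2+C\,\rho_\lambda(cn)\qquad\text{for every }n\ge1 .
$$

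To prove this I would tile a $3kn\times kn$ rectangle by a bounded number of translates and right‑angle rotates of $3n\times n$ rectangles (``bricks''), arranged in $k$ rows so that (i) the long‑direction crossings of any two overlapping bricks are forced to intersect — the elementary planar fact that two continuous curves crossing a square in perpendicular directions must meet, which holds inside $\occ$ and inside $\R^2\setminus\occ$ alike — and (ii) if every brick is crossed in its long direction, together with auxiliary crossings of the (square) overlaps, these curves glue into a horizontal crossing of the $3kn\times kn$ rectangle. Consequently, if $\cross(3kn,kn)$ fails then the set of ``bad'' bricks must, by planar duality of the finite brick‑adjacency graph, contain a chain from the bottom row to the top row, and in particular a bad brick in the bottom row and one in the top row. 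For $k$ large these two bricks lie in $\ell_\infty$‑boxes of radius $\asymp n$ whose centres are at distance at least $(2+\eps_0)$ times that radius; since each ``bad brick'' event is measurable with respect to the restriction of $\occ$ to its box, the definition of $\rho_\lambda$ bounds the probability that both are bad by $q_\lambda(n)^2+\rho_\lambda(cn)$, and summing over the $O(1)$ choices of the pair yields the one‑step estimate. The constraint $\eps_0\le1/5$ is precisely what makes this separation achievable with a bounded $k$ (for instance $k=5$).

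Iterating is then routine. Given $\lambda\in I$, suppose $q_\lambda(n_0)<1/200$ for some $n_0\ge N$ and put $n_j:=k^jn_0$. Using the \emph{uniform} decay of $\rho_\lambda$ over $I$, fix $N=N(I)$ so large that $C\rho_\lambda(m)$ is small enough, for all $m\ge N$ and all $\lambda\in I$, that the recursion $q_\lambda(n_{j+1})\le C q_\lambda(n_j)^2+C\rho_\lambda(cn_j)$ started below $1/200$ remains below $1/200$ and tends to $0$ — this is where the numerical threshold must be small relative to the combinatorial constant $C$. Thus $q_\lambda(n_j)\to0$. For general $m$, choosing $j$ with $n_j\le m<n_{j+1}$ and using that a horizontal crossing of a rectangle persists under shrinking the width and enlarging the height gives $\Pbf_\lambda(\cross(3m,m))\ge\Pbf_\lambda(\cross(3n_{j+1},n_j))$, while $\cross(3n_{j+1},n_j)=\cross(3kn_j,n_j)$ has probability at least $1-C'q_\lambda(n_j)\to1$ by a plain union bound over $O(1)$ crossings of rectangles of bounded aspect ratio (no squaring needed). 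Hence $\Pbf_\lambda(\cross(3m,m))\to1$, which is part~(a).

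For part~(b) I would pass to the complement. By the planar duality of the Boolean model relating occupied horizontal crossings of a rectangle to vacant vertical crossings — here one does need the customary care about boundary effects and open‑versus‑closed conventions, the discs being closed — one has $\Pbf_\lambda(\cross(n,3n))=1-\Pbf_\lambda(\text{vacant vertical crossing of }[0,n]\times[0,3n])$ for every $n$, and by right‑angle rotational invariance the vacant probability equals that of a vacant horizontal crossing of $[0,3n]\times[0,n]$. The set $\R^2\setminus\occ$ is rotationally invariant and, being a function of the same indicators $Y_x=\ind_{\{x\in\occ\}}$, has the very same correlation functional $\rho_\lambda$, so the hypothesis $\rho_\lambda(n)\to0$ uniformly on $I$ carries over verbatim and the argument of part~(a) applies to the complement process. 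The hypothesis of~(b) then says exactly that the vacant horizontal crossing probability of $[0,3n]\times[0,n]$ exceeds $1-1/200$; by part~(a) it tends to $1$, and duality gives $\Pbf_\lambda(\cross(n,3n))\to0$. The main obstacle throughout is the one‑step block construction of the second paragraph: arranging the tiling so that overlapping brick‑crossings are topologically forced to meet, and so that failure of the large crossing forces two badly‑crossed bricks into $\ell_\infty$‑boxes separated by at least $(2+\eps_0)$ times their radius — with $\eps_0\le1/5$ and the threshold $1/200$ calibrated to these constraints; the iteration, the interpolation to non‑geometric scales, and the reduction of~(b) to~(a) are routine.
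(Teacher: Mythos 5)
Your proposal is correct and follows essentially the same route as the paper: the renormalisation inequality $q_\lambda(kn)\le Cq_\lambda(n)^2+C\rho_\lambda(cn)$ (the paper takes $k=3$, deriving $q_\lambda(3n)\le 49\,q_\lambda(n)^2+\rho_\lambda(9n)$ by forcing the failure of two parallel $9n\times n$ strip crossings, each controlled by a union bound over $7$ bricks --- which is exactly your ``bad brick in the bottom row and one in the top row'', so the appeal to planar duality of the brick-adjacency graph is an unnecessary detour), followed by the same iteration, the same interpolation to intermediate scales, and the same reduction of part (b) to part (a) via vacant crossings. The one point you flag but do not verify --- that the combinatorial constant is compatible with the fixed threshold $1/200$ --- does close: with $k=3$ one gets $C=49$, and requiring $\rho_\lambda(m)\le 1/400$ for $m\ge 9N$ makes the recursion contract.
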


\begin{proof}
Set $N=\min\{n\ge1:\rho_\lambda(m)\le1/400\text{ for all $m\ge9n$ and }\lambda\in I\}$. Note that we may obtain a horizontal crossing on an $9n\times n$-rectangle from horizontal crossings of 4 overlapping $3n\times n$-rectangles and 3 vertical crossings of $n\times n$-rectangles. Consequently, using the union bound, we obtain that
$$
\Pbf_\lambda(\neg\cross(9n,n))\,\le\,7\Pbf_\lambda(\neg\cross(3n,n)).
$$
In addition, if the event $\cross(9n,3n)$ fails, then both $A_1=\cross(9n,n)$ and the translate $A_2$ of $A_1$ along the vector $(0,2n)$ has to fail too. Hence,
$$
\Pbf_\lambda(\neg\cross(9n,3n))\,\le\,\Pbf_\lambda(A_1^c\cap A_2^c)\,\le\,\Pbf_\lambda(A_1^c)^2+\rho_\lambda(9n).
$$
Moreover, if $q_\lambda(n):=1-\Pbf_\lambda(\cross(3n,n))$, then
\begin{equation}\label{eq:contraction}
q_\lambda(3n)\le49q_\lambda(n)^2+\rho_\lambda(9n).
\end{equation}
Now, if for some $\lambda\in I$ and $n\ge N$ we have $q_\lambda(n)<1/200$, then iterated use of~\eqref{eq:contraction} gives that $q_\lambda(3^kn)<1/200$ for all $k\ge1$, and for any $\ell=1,2,\ldots,k$ we find that
$$
q_\lambda(3^{k}n)\,\le\,\frac{1}{2^\ell}q_\lambda(3^{k-\ell}n)+\sum_{j=1}^\ell\frac{1}{2^{j-1}}\rho_\lambda(3^{k+1-j}n)\,\le\,\frac{1}{2^\ell}+2\rho_\lambda(3^{k+1-\ell}n).
$$
Taking limits, first sending $k$ and then $\ell$ to infinity, shows that $q_\lambda(3^kn)\to0$ as $k\to\infty$.

Since for any $n'\in[n,3n]$ we have
$$
q_\lambda(n')\,\le\,\Pbf_\lambda(\neg\cross(9n,n))\,\le\,7q_\lambda(n),
$$
part~\emph{a)} follows.
Part~\emph{b)} is proved verbatim, by replacing $\cross(n,3n)$ with the dual event that $[0,n]\times[0,3n]\setminus\occ$ contains a continuous curve connecting the the top and bottom of the rectangle, i.e., a vertical vacant crossing of the rectangle $[0,n]\times[0,3n]$.
\end{proof}

Under a slightly stronger assumption the conclusion of Proposition~\ref{prop:finite-size} can be strengthened to the existence of an unbounded occupied component.

\begin{prop}\label{prop:infinite-size}
Assume that the law of the random field is invariant with respect to rotations by right angles, and that $\pi_\lambda(n)\to0$ and $\overline\pi(n)\le(\alpha\log n)^{-(1+\alpha)}$ for some $\alpha>0$. Then,
$$
\lim_{n\to\infty}\Pbf_\lambda(\cross(3n,n))=1\quad\Rightarrow\quad\Pbf_\lambda\big(0\stackrel{\occ}{\leftrightarrow}\infty\big)>0.
$$
\end{prop}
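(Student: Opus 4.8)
The plan is to upgrade the hypothesis that horizontal crossings of $3n\times n$-rectangles occur with probability tending to one into the statement that, with positive probability, the origin lies in an unbounded occupied component. The natural route is a renormalization/block argument. First I would fix $n$ large enough that $\Pbf_\lambda(\cross(3n,n))$ is very close to $1$, say exceeding $1-1/200$; by Proposition~\ref{prop:finite-size} (whose hypothesis $\rho_\lambda(n)\to0$ follows from the present assumptions via Proposition~\ref{prop:cor-bound}) we then know $\Pbf_\lambda(\cross(3n,n))\to1$. The standard Russo--Seymour--Welsh-type gluing then shows that crossings of long rectangles at all larger scales are very likely. The idea is to declare a box at scale $n$ (say one of the tiles $B_\infty(x,n)$ for $x$ in a lattice of spacing $2n$) "good" if certain horizontal and vertical crossings of the associated $3n\times n$- and $n\times 3n$-rectangles in and around it occur; if two neighbouring boxes are good, their crossings overlap and glue into a connected occupied path. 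A path of good boxes to infinity then produces an occupied path to infinity.

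The essential difficulty, and the reason for the two quantitative hypotheses, is that the "good box" events are \emph{not} independent: nearby boxes share randomness from both the Poisson process and, more seriously, the random field $\varphi$. The role of $\pi_\lambda(n)\to0$ is to let us localise the occupied set: up to a small error we may replace $\occ$ restricted to a box $B_\infty(0,n)$ by $\occ_{K'}$ with $K'=B_\infty(0,(1+\eps_0/4)n)$, so that the relevant event depends only on $\eta$ and $\varphi$ in a bounded neighbourhood of the box. The role of the logarithmic decay hypothesis $\overline\pi(n)\le(\alpha\log n)^{-(1+\alpha)}$ is to control the residual long-range dependence coming from the field alone: after the $\pi_\lambda$-localisation, two good-box events based at scale $n$ but separated by the scale in the definition of $\overline\pi$ decorrelate at rate $\overline\pi$. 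The key step is therefore to run the renormalization at a \emph{growing} sequence of scales $n_k$ (doubling, or tripling, so that $n_k\sim 3^k n_0$) and, at each scale, pay an error of order $\pi_\lambda(cn_k)+\overline\pi(cn_k)$ per gluing. Because $\overline\pi(n)\le(\alpha\log n)^{-(1+\alpha)}$ and $\log n_k\sim k\log 3$, these errors are summable in $k$ (the exponent $1+\alpha>1$ is exactly what makes $\sum_k k^{-(1+\alpha)}<\infty$), so a Borel--Cantelli argument shows that from some scale onward all the required crossings are present, whence the connected occupied cluster containing a fixed box is unbounded with positive probability.

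Concretely I would: (i) use Proposition~\ref{prop:cor-bound} and the hypotheses to get $\rho_\lambda(n)\to0$, then invoke Proposition~\ref{prop:finite-size}a) to obtain $\Pbf_\lambda(\cross(3n,n))\to1$ and, by the rotation invariance, $\Pbf_\lambda(\cross(n,3n))\to1$ as well; (ii) at scale $n_k$ define the good-box event as the conjunction of a bounded number of horizontal and vertical crossings of $3n_k\times n_k$-rectangles covering an annular region around the box, chosen so that a good box connects to each of its neighbours, and localise each such crossing using $\pi_\lambda$; (iii) estimate the probability that a good box fails by $1-\Pbf_{\rm good}(n_k)\le C\big(q_\lambda(n_k)+\pi_\lambda(c n_k)\big)$ where $q_\lambda(n)=1-\Pbf_\lambda(\cross(3n,n))$, and then, crucially, estimate the probability that a \emph{finite} chain of boxes surrounding the origin fails to contain a good crossing, using a union bound together with the decorrelation estimate $\big|\Pbf_\lambda(E_1\cap E_2)-\Pbf_\lambda(E_1)\Pbf_\lambda(E_2)\big|\le 4\pi_\lambda(cn_k)+\overline\pi(cn_k)$ for boxes far apart; (iv) sum the resulting error over $k$ using the logarithmic bound and conclude by Borel--Cantelli that, off a positive-probability event, there is a good box at every sufficiently large scale nested around the origin, giving an unbounded occupied path from the origin's box with positive probability, and then remove the conditioning on the box being covered by noting the origin is in $\occ$ with positive probability independently at scale $n_0$. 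The main obstacle is carrying the dependence bookkeeping through step (iv) so that the per-scale errors really are summable despite the slow (logarithmic) decay of $\overline\pi$; this is where the precise form of the hypothesis and the choice of a geometrically growing sequence of scales are indispensable.
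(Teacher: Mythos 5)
Your overall strategy --- localize the crossing events using $\pi_\lambda$, decorrelate the localized events using $\overline\pi$, work along geometric scales $n_k\sim 3^k$ so that $\overline\pi(n_k)\lesssim k^{-(1+\alpha)}$ is summable, and finish with Borel--Cantelli applied to nested crossings of $3^{k+1}\times 3^k$-rectangles of alternating orientation --- is exactly the paper's. But there is a genuine gap in your steps (iii)--(iv): you propose to bound the failure probability at scale $n_k$ by $C\bigl(q_\lambda(n_k)+\pi_\lambda(cn_k)\bigr)$ plus correlation errors and then sum over $k$. The hypotheses give no rate of decay for either $q_\lambda(n)=1-\Pbf_\lambda(\cross(3n,n))$ or $\pi_\lambda(n)$ --- both merely tend to zero --- so this sum need not converge and Borel--Cantelli does not apply. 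Nor can you rescue summability by packing many good boxes into each annulus: an annulus at radius $\sim n_k$ contains only $O(1)$ boxes of side $n_k$, while using boxes of fixed size would require a domination by supercritical independent percolation that the logarithmic decay of $\overline\pi$ is far too slow to provide.

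The missing ingredient is a bootstrap that upgrades ``crossing probability close to $1$'' into a summable failure rate. The paper achieves this by introducing the localized quantity $\overline q_\lambda(n)$ (the probability that $\occ_K$, with $K$ a slightly enlarged rectangle, fails to cross $[0,3n]\times[0,n]$ horizontally), using $\pi_\lambda(n)\to0$ only once, to guarantee $\overline q_\lambda(n)\le q_\lambda(n)+3\pi_\lambda(n)<1/200$ at some finite scale, and then proving the quadratic contraction $\overline q_\lambda(3n)\le 49\,\overline q_\lambda(n)^2+\overline\pi(9n)$; the point is that the localized crossing events in disjoint enlarged rectangles are conditionally independent given $\varphi$, so only $\overline\pi$ --- and neither $\pi_\lambda$ nor $\rho_\lambda$ --- enters the error term. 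Iterating this recursion for $\ell=\lfloor k/2\rfloor$ steps yields $\overline q_\lambda(3^{k+k_0})\le 2^{-\lfloor k/2\rfloor}+2\overline\pi(3^{k/2+k_0})\lesssim k^{-(1+\alpha)}$, which is summable; this is precisely where the hypothesis $\overline\pi(n)\le(\alpha\log n)^{-(1+\alpha)}$ is consumed. With that inequality in hand your step (iv) goes through; without it, the argument as written stalls at the summability claim.
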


\begin{proof}
Let $q_\lambda(n)$ be defined as in the proof of Proposition~\ref{prop:finite-size}. Let $K=[-n/4,13n/4]\times[-n/4,5n/4]$, and denote by $\overline q_\lambda(n)$ the $\Pbf_\lambda$-probability that $\occ_K$ does not contain a horizontal crossing of $[0,3n]\times[0,n]$. By assumption we have $q_\lambda(n)\to0$ as $n\to\infty$, so since also $\pi_\lambda(n)\to0$ as $n\to\infty$ we conclude that
$$
\overline q_\lambda(n)\le q_\lambda(n)+3\pi_\lambda(n)\to0\quad\text{as }n\to\infty.
$$
Let $K'=[-n/4,37n/4]\times[-n/4,5n/4]$ and denote by $A_1$ the event that $[0,9n]\times[0,n]$ is crossed horizontally by $\occ_{K'}$. Let $A_2$ denote the translate of $A_1$ along the vector $(0,2n)$. We note that the two events $A_1$ and $A_2$ are conditionally independent given $\varphi$. Hence, by repeating the steps of the proof of Proposition~\ref{prop:finite-size} we obtain the following analogue of~\eqref{eq:contraction}
\begin{equation}\label{eq:bar-contraction}
\overline q_\lambda(3n)\le49\overline q_\lambda(n)^2+\overline\pi(9n).
\end{equation}
Let $k_0$ be such that $\overline q_\lambda(3^k)<1/200$ for all $k\ge k_0$. By iterated use of~\eqref{eq:bar-contraction} we obtain
$$
\overline q_\lambda(3^{k+k_0})\le\frac{1}{2^\ell}+2\overline\pi(3^{k+1+k_0-\ell})
$$
for $\ell=1,2,\ldots,k$. With $\ell=\lfloor k/2\rfloor$ we obtain, summing over $k\ge1$, that
\begin{equation}\label{eq:finite-sum}
\sum_{k\ge1}\overline q_\lambda(3^{k+k_0})\,\le\, 4+2\sum_{k\ge1}\overline\pi(3^{k/2+k_0})\,<\,\infty.
\end{equation}

Now we tile the first quadrant by rectangles of dimensions $3^{k+1}\times3^k$, alternating between horizontally and vertically, each with its lower left corner at the origin. By~\eqref{eq:finite-sum} and Borel-Cantelli, all but finitely many of these rectangles will be crossed in the hard directions almost surely. Since the crossings of two rectangles at consecutive scales have to intersect, the crossings together form an unbounded occupied component.
\end{proof}

\begin{proof}[{\bf Proof of Theorem~\ref{thm:nontrivial}}]
We first prove part~\emph{a)}. Since $\pi_\lambda(n)$ is increasing in $\lambda$ it follows from Proposition~\ref{prop:cor-bound} that $\rho_\lambda(n)\to0$ as $n\to\infty$ uniformly on compact sets. Let $I=[0,1]$ and let $N=N(I)$ be as in Proposition~\ref{prop:finite-size}. For any $n\ge N$ we have $\Pbf_0(\cross(n,3n))=0$. However, for $m\ge3n$ large, so that $\pi_1(m)<1/400$, and $\delta>0$ small, so that the $\Pr_\delta$-probability of finding a point within an $m\times m$-box is at most $1/400$, we can assure that
$$
\Pbf_\delta(\cross(n,3n))<1/200.
$$
Via Proposition~\ref{prop:finite-size}, we conclude that $\Pbf_\delta(\cross(n,3n))\to0$ as $n\to\infty$.
We then note that on the event that the origin is contained in an unbounded component, for each $n\ge1$ there exists an occupied crossing connecting the boundary of $B_\infty(0,n/2)$ with the boundary of $B_\infty(0,3n/2)$. However, this may only happen if at least one out of four copies of an $n\times3n$-rectangle is crossed the easy way. That is, using the union bound,
$$
\Pbf_\delta\big(0\stackrel{\occ}{\leftrightarrow}\infty\big)\,\le\,4\Pbf_\delta\big(\cross(n,3n)\big)\to0
$$
as $n\to\infty$. This ends the proof of part~\emph{a)}.

We now turn to part~\emph{b)}. Let $M$ be as in the statement of the theorem and let $\varphi_M(x):=\min\{\varphi(x),M\}$. Notice that $\occ(\varphi_M)\subseteq\occ(\varphi)$, so we may for the rest of the proof work with $\varphi_M$. For the field $\varphi_M$ we have $\pi_\lambda(n)=0$ for all large $n$, and hence that $\rho_\lambda(n)\to0$ uniformly in $\lambda\in[0,\infty)$. Let $N=N([0,\infty))$ be as in Proposition~\ref{prop:finite-size}, and choose $n\ge N$ and $\lambda$ large so that $\Pbf_\lambda(\cross_M(3n,n))>1-1/200$. By Proposition~\ref{prop:finite-size} we have for this $\lambda$ that $\Pbf_\lambda(\cross_M(3n,n))\to1$ as $n\to\infty$. Then, by Proposition~\ref{prop:infinite-size}, we have
$$
\Pbf_\lambda\big(0\stackrel{\occ(\varphi)}{\longleftrightarrow}\infty\big)\,\ge\,\Pbf_\lambda\big(0\stackrel{\occ(\varphi_M)}{\longleftrightarrow}\infty\big)\,>\,0,
$$
and thus that $\lambda_c<\infty$, as required.
\end{proof}

We end this section by asking for a sharp threshold.

\begin{quest}
Under what conditions does there exist $\lambda_0\in(0,\infty)$ so that for all $\kappa\in(0,\infty)$
\bea
\lim_{n\to\infty}\Pbf_\lambda(\cross(\kappa n,n))\,=\,\left\{
\begin{aligned}
&1 && \text{for }\lambda>\lambda_0,\\
&0 && \text{for }\lambda<\lambda_0,\\
\end{aligned}
\right.
\eea
and such that $\Pbf_{\lambda_0}(\cross(\kappa n,n))\in(c,1-c)$ for all $n\ge1$ and some $c=c(\kappa)>0$?
\end{quest}

Techniques developed in~\cite{ahltastei18a} may be used to give a partial answer to this question. For a large class of models these techniques give the existence of $\lambda_0\le\lambda_1$ such that $\Pbf_\lambda(\cross(\kappa n,n))\in(c,1-c)$ for $\lambda\in[\lambda_0,\lambda_1]$, whereas $\Pbf_\lambda(\cross(\kappa n,n))$ tends to either 0 or 1 outside the `critical' interval $[\lambda_0,\lambda_1]$. To show that $\lambda_0=\lambda_1$ seems in general to be harder.

We remark that one could additionally define $\lambda_c^\star$ as the point of transition between the existence and nonexistence of an unbounded vacant component.
Our proof of Theorem~\ref{thm:nontrivial} can be adapted to show that $\lambda_c^\star<\infty$ whenever $\overline\pi(n)$ tends to zero and condition~\eqref{eq:thm2} holds,
and that $\lambda_c^\star>0$ whenever $\pi_\lambda(n)$ and $\overline\pi(n)$ are bounded by $(\alpha\log n)^{-(1+\alpha)}$ for some $\lambda>0$, $\alpha>0$ and all $n\ge1$.
Assuming that $\overline\pi(n)\le(\alpha\log n)^{-(1+\alpha)}$, which additional conditions are generally needed for the inequalities $0<\lambda_c^\star\le\lambda_c<\infty$ to hold?

Finally we note that condition~\eqref{eq:thm2} of Theorem~\ref{thm:nontrivial} is not far from sharp. Indeed, under the additional assumptions that the field is bounded, invariant with respect to reflection in coordinate axes, and positively associated, then the condition in~\eqref{eq:thm2} is necessary. To see this, assume that~\eqref{eq:thm2} fails.
By Proposition~\ref{prop:finite-size} we then have, for each $\lambda>0$, that $\Pbf_\lambda(\cross(3n,n))<1-1/200$ for all large $n$. By Theorem~4.1 of~\cite{ahltastei18a} it follows that also $\sup_{n\ge1}\Pbf_\lambda(\cross(n,3n))<1$. A standard argument (see Corollary~4.4 of~\cite{ahltastei18a}) then shows that an infinite occupied cluster cannot exist at intensity $\lambda$, almost surely. Since $\lambda>0$ was arbitrary, we must have $\lambda_c=\infty$.

\section{Comparison between the critical parameters}\label{sec:box-model}

We will in this section provide an example of a one-parameter family of random fields for which, depending on the value of the parameter, the critical value $\lambda_c$ is either strictly less than, equal to or strictly greater than the corresponding threshold $\lambda_\Phi$. The fields we consider will be constructed from a partitioning of the plane into bounded cells, on each of which the field will be given value $a$ or $b$, according to independent coin flips. The bias of the coin will provide a parameter for the model.

Given $a,b\in(0,\infty)$ where $a<b$, let $\lambda_a$ and $\lambda_b$ denote the critical parameters corresponding to the constant fields $\varphi\equiv a$ and $\varphi\equiv b$, respectively. It is well known that both $\lambda_a$ and $\lambda_b$ are nondegenerate, and the strict inequality $\lambda_a>\lambda_b$ follows from a simple scaling argument. It is further known (see~\cite[Theorem~5.1]{roytan02}) that for every $p\in(0,1)$ the critical parameter for the i.i.d.\ model associated with $\Phi=p\delta_b+(1-p)\delta_a$ satisfies
$$
0<\lambda_b<\lambda_\Phi(p)<\lambda_a<\infty.
$$

We define a (family of) fields $\varphi:\R^2\to\{a,b\}$ as follows. Given $\mu\in(0,\infty)$ and $p\in[0,1]$, let $\xi$ be a homogeneous Poisson point process on $\R^2\times[0,1]$ of intensity $\mu$. Let $\{C(y):(y,z)\in\xi\}$ denote the Voronoi tessellation of ${\mathbb R}^2$ based on $\xi$, i.e., where
$$
C(y):=\big\{x\in\R^2:|x-y|\le|x-y'|\text{ for all }(y',z')\in\xi\big\}.
$$
Finally, for $x\in\Z^2$ we set $\varphi(x)=b$ if $x\in C(y)$ for some $(y,z)\in\xi$ with $z\le p$, and $\varphi(x)=a$ otherwise. Let $\lambda_c(\mu,p)$ denote the corresponding critical parameter. Clearly
$$
\lambda_b\,\le\,\lambda_c(\mu,p)\,\le\,\lambda_a.
$$
Moreover, since $\varphi$ is bounded we have $\pi_\lambda(n)=0$ for all $\lambda$ and large $n$. On the other hand, as the expected area of the Voronoi cells are inverse proportional to $\mu$ we find that $\overline\pi(n)=\overline\pi(\mu,n)$ scales as
\begin{equation}\label{eq:pibarscaling}
\overline\pi(\mu,n)=\overline\pi(1,\sqrt{\mu}n),
\end{equation}
for $p$ fixed. That $\overline\pi(1,n)$ decays super-exponentially fast in $n$, uniformly in $p$, is a consequence of well-known properties of Voronoi tessellations, see~\cite[Lemma~8.18]{bolrio06a}.

We shall examine the behaviour of the model both for $\mu$ small and large. When $\mu$ is large most cells contain at most one point, which suggests that the critical intensity should be close to the critical intensity for i.i.d.\ radii.

\begin{prop}\label{prop:box-small}
The critical density $\lambda_c:(0,\infty)\times[0,1]\to[\lambda_b,\lambda_a]$ satisfies
$$
\lim_{\mu\to\infty}\sup_{p\in[0,1]}\big|\lambda_c(\mu,p)-\lambda_\Phi(p)\big|=0.
$$
\end{prop}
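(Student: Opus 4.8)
The plan is to compare, scale by scale, the crossing probabilities of Gilbert's model driven by the field $\varphi=\varphi^{\mu,p}$ with those of Gilbert's model with i.i.d.\ radii of marginal $\Phi=p\delta_b+(1-p)\delta_a$, to show that the former converge to the latter as $\mu\to\infty$ \emph{uniformly} in $p\in[0,1]$ and in $\lambda$ on compact subsets of $(0,\infty)$, and then to feed this comparison into the finite-size criterion of Propositions~\ref{prop:finite-size} and~\ref{prop:infinite-size} together with the known sharpness of the phase transition of the i.i.d.\ model. Throughout, let $\cross_\Phi(s,t)$ denote the occupied horizontal crossing event of $[0,s]\times[0,t]$ for the i.i.d.\ model $\occ_\Phi$, and recall that $\lambda_b\le\lambda_c(\mu,p)\le\lambda_a<\infty$, so that everything lives in a fixed compact interval of intensities.

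\emph{The comparison lemma.} Since $\varphi^{\mu,p}\le b$ and $\Phi^{-1}\le b$, the occupied set inside a fixed rectangle $Q=[0,s]\times[0,t]$ is, in either model, a function of $\eta$ and of the radii assigned to the points of $\eta$ lying in the bounded $b$-neighbourhood $R$ of $Q$; moreover $\Phi^{-1}(z)$ equals $b$ with probability $p$ and $a$ otherwise, independently over the points of $\eta$. Fix $\delta>0$ and let $G$ be the event that the (almost surely finitely many, almost surely distinct) points of $\eta\cap R$ are pairwise at distance at least $\delta$ and lie in pairwise distinct Voronoi cells of $\xi$. Conditionally on $G$ and on the positions $\eta\cap R$, the colours of the distinct cells hosting these points are i.i.d.\ Bernoulli$(p)$ — the cell colours being independent of the tessellation geometry — which is exactly the conditional law of the i.i.d.\ radii; hence the two models admit a coupling that agrees inside $Q$ on the event $G$, so that $|\Pbf_\lambda(\cross(s,t))-\Pr_\lambda(\cross_\Phi(s,t))|\le\Pbf_\lambda(G^c)$. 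By the Mecke equation the probability that $\eta\cap R$ fails to be $\delta$-separated is at most $\lambda^2|R|\pi\delta^2$, while if two fixed points at distance $\ge\delta$ lie in a common Voronoi cell then one of the two discs of radius $\delta/2$ about them is free of nuclei of $\xi$, an event of probability at most $2e^{-\mu\pi\delta^2/4}$; summing over pairs gives $\Pbf_\lambda(G^c)\le\lambda^2|R|\pi\delta^2+(\lambda|R|)^2e^{-\mu\pi\delta^2/4}$. Letting $\mu\to\infty$ and then $\delta\to0$ yields $|\Pbf_\lambda(\cross(s,t))-\Pr_\lambda(\cross_\Phi(s,t))|\to0$, uniformly in $p$, in $\lambda$ on compacts, and for each fixed rectangle; the same applies to the transposed and to the dual vacant crossing events.

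\emph{Ingredients for the transfer.} Because $\varphi^{\mu,p}$ is bounded, $\pi_\lambda(n)=0$ for all large $n$ uniformly in $\lambda$, and by~\eqref{eq:pibarscaling} together with~\cite[Lemma~8.18]{bolrio06a} the quantity $\overline\pi(\mu,n)$ decays super-exponentially; hence $\rho_\lambda(n)\to0$ uniformly over $\lambda\in[\lambda_b,\lambda_a]$ by Proposition~\ref{prop:cor-bound}, so Proposition~\ref{prop:finite-size} applies with a single $N=N([\lambda_b,\lambda_a])$, and so does Proposition~\ref{prop:infinite-size} (the law of $\varphi^{\mu,p}$ is rotation invariant and the hypothesis on $\overline\pi$ holds a fortiori). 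On the i.i.d.\ side I use that the Boolean model with bounded marginal $\Phi=p\delta_b+(1-p)\delta_a$ has a sharp phase transition (see~\cite{ahltastei18a,ahltastei18b} and~\cite{roytan02}): $\Pr_\lambda(\cross_\Phi(3n,n))\to1$ for $\lambda>\lambda_\Phi(p)$ and $\Pr_\lambda(\cross_\Phi(n,3n))\to0$ for $\lambda<\lambda_\Phi(p)$; that $p\mapsto\lambda_\Phi(p)$ is continuous with $\lambda_\Phi(0)=\lambda_a$ and $\lambda_\Phi(1)=\lambda_b$; and that for each fixed rectangle the probability $\Pr_\lambda(\cross_\Phi(s,t))$ is jointly continuous in $(\lambda,p)$ — all standard facts.

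\emph{Conclusion.} To prove $\limsup_{\mu\to\infty}\sup_p\big(\lambda_c(\mu,p)-\lambda_\Phi(p)\big)\le0$, suppose otherwise: along some $\mu_k\to\infty$ and $p_k$ one has $\lambda_c(\mu_k,p_k)>\lambda_\Phi(p_k)+\eps$, and after passing to a subsequence $p_k\to p^*$. Since $\lambda_c(\mu,p)\le\lambda_a$, the assumption forces $\lambda_a-\lambda_\Phi(p^*)\ge\eps$ (otherwise $\lambda_\Phi(p_k)+\eps>\lambda_a\ge\lambda_c(\mu_k,p_k)$ for $k$ large, by continuity of $\lambda_\Phi$), so $\lambda^*:=\lambda_\Phi(p^*)+\eps/4$ lies in $(\lambda_b,\lambda_a)$. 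As $\lambda^*>\lambda_\Phi(p^*)$, the i.i.d.\ model is supercritical there, so $\Pr_{\lambda^*}(\cross_\Phi(3n^*,n^*))>1-\tfrac1{400}$ for some $n^*\ge N$; by the comparison lemma (as $\mu_k\to\infty$) and continuity in $p$, the field $\varphi^{\mu_k,p_k}$ satisfies $\Pbf_{\lambda^*}(\cross(3n^*,n^*))>1-\tfrac1{200}$ once $k$ is large, whence Propositions~\ref{prop:finite-size}(a) and~\ref{prop:infinite-size} produce an unbounded occupied component at intensity $\lambda^*$, i.e.\ $\lambda_c(\mu_k,p_k)\le\lambda^*$; since $\lambda_\Phi(p_k)\to\lambda_\Phi(p^*)$, this contradicts $\lambda_c(\mu_k,p_k)>\lambda_\Phi(p_k)+\eps$ for $k$ large. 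The reverse inequality $\liminf_{\mu\to\infty}\inf_p\big(\lambda_c(\mu,p)-\lambda_\Phi(p)\big)\ge0$ is symmetric: one supposes $\lambda_c(\mu_k,p_k)<\lambda_\Phi(p_k)-\eps$ with $p_k\to p^*$, uses $\lambda_c\ge\lambda_b$ to force $\lambda_\Phi(p^*)-\lambda_b\ge\eps$, sets $\lambda^*:=\lambda_\Phi(p^*)-\eps/4\in(\lambda_b,\lambda_a)$, invokes subcriticality of the i.i.d.\ model to find $n^*\ge N$ with $\Pr_{\lambda^*}(\cross_\Phi(n^*,3n^*))<\tfrac1{400}$, transfers this to $\Pbf_{\lambda^*}(\cross(n^*,3n^*))<\tfrac1{200}$ for $\varphi^{\mu_k,p_k}$ and $k$ large, applies Proposition~\ref{prop:finite-size}(b), and rules out an unbounded occupied component via the bound $\Pbf_\lambda\big(0\stackrel{\occ}{\leftrightarrow}\infty\big)\le4\Pbf_\lambda(\cross(n,3n))$ from the proof of Theorem~\ref{thm:nontrivial}(a), so that $\lambda_c(\mu_k,p_k)\ge\lambda^*$ — again a contradiction. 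The main obstacle is the comparison lemma, and within it the point that two points of $\eta$ can be arbitrarily close and hence share a Voronoi cell with non-vanishing probability even for large $\mu$; this is precisely why one first discards the small-probability event that $\eta\cap R$ is not $\delta$-separated. Threading the uniformity in $p$ through the finite-size machinery afterwards is routine once continuity of $\lambda_\Phi$ and of the finite-scale crossing probabilities is in hand.
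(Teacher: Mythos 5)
Your proof is correct and follows essentially the same route as the paper's: a coupling-based comparison of fixed-scale crossing probabilities between the Voronoi-field model and the i.i.d.\ model, uniform in $p$ and in $\lambda$ on compacts, which is then fed into the finite-size criterion of Propositions~\ref{prop:finite-size} and~\ref{prop:infinite-size} together with continuity of $\lambda_\Phi(p)$. The only differences are cosmetic: you establish the comparison via a conditional-law/total-variation argument with an explicit $\delta$-separation step (slightly more careful than the paper's one-line claim that the relevant points of $\eta$ lie in distinct Voronoi cells with high probability for large $\mu$), and you obtain uniformity in $p$ by a subsequence--compactness contradiction rather than the paper's finite subcover of $[0,1]$.
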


\begin{proof}
Since $\pi_\lambda(n)=0$ for large $n$ and $\overline\pi(\mu,n)=\overline\pi(1,\sqrt{\mu}n)$, we obtain constants $\gamma>0$ and $N\ge1$ as in Proposition~\ref{prop:finite-size}, uniformly in $\lambda$, $p$ and $\mu\ge1$.
The first step will be to show that for every $\lambda^\ast>0$ and $n\ge1$ we have for sufficiently large $\mu$ that
\be\label{eq:iid-comparison}
\sup_{\lambda\in[0,\lambda^\ast]}\sup_{p\in[0,1]}\big|\Pbf_\lambda(\cross(3n,n))-\Pr_\lambda(\cross(3n,n))\big|\le\gamma/3.
\ee
The same conclusion will hold for $\cross(3n,n)$ exchanged for $\cross(n,3n)$.

In order to establish~\eqref{eq:iid-comparison} we will construct the two processes on the same probability space. Let $\eta$ and $\xi$ be homogeneous Poisson point processes on $\R^2\times\R_+\times[0,1]$ and $\R^2\times\R_+$, respectively, both with intensity one. A realization of the occupied region for the i.i.d.\ process at intensity $\lambda$ is obtained as
$$
\occ_1:=\bigcup_{(x,y,z)\in\eta:\,y\le\lambda}B\big(x,a+(b-a)\ind_{\{z\le p\}}\big).
$$
Consider the Voronoi tiling based on $\{u:(u,v)\in\xi,v\le\mu\}$ and let $z_0(u)$ denote the value of the $z$-coordinate of the point $(x,y,z)\in\eta\cap C(u)$ with least $y$-coordinate. We construct the field $\varphi:\R^2\to\{a,b\}$ where $\varphi(w)=b$ if $w\in C(u)$ for some $(u,v)\in\xi$ with $v\le\mu$ and $z_0(u)\le p$.
Now, set
$$
\occ_2:=\bigcup_{(x,y,z)\in\eta:\,y\le\lambda}B(x,\varphi(x)).
$$
Since the projections of $\eta$ on each of its coordinates are independent it is easy to see that $\occ_1$ and $\occ_2$ have the correct marginal distributions. It is also clear from the construction that $\occ_1$ and $\occ_2$ will coincide on a compact set $K$ if each point of $\eta$ with $x$-coordinate within distance $b$ from $K$ are contained in disjoint Voronoi cells. Given $\lambda^\ast>0$ and $n\ge1$ the probability for this to fail is uniformly small in $\lambda\in[0,\lambda^\ast]$ and $p\in[0,1]$ for large enough $\mu$, which proves~\eqref{eq:iid-comparison}.

Pick $\eps>0$. For each $p\in[0,1]$ take $n\ge N$ so that $\Pr_{\lambda_\Phi(p)+\eps}(\cross(3n,n))>1-\gamma/3$, which by~\eqref{eq:iid-comparison} implies that $\Pbf_{\lambda_\Phi(p)+\eps}(\cross(3n,n))>1-2\gamma/3$ for large $\mu$.
On the other hand we find for each $p\in[0,1]$ an $n\ge N$ so that $\Pr_{\lambda_\Phi(p)-\eps}(\cross(n,3n))<\gamma/3$, and hence that $\Pbf_{\lambda_\Phi(p)-\eps}(\cross(n,3n))<2\gamma/3$ for large enough $\mu$.
By Proposition~\ref{prop:finite-size} we conclude that as $\mu\to\infty$
$$
\lambda_c(\mu,p)\to\lambda_\Phi(p)
$$
point-wise. In order to obtain uniform convergence we first recall (see~\cite[Theorem~3.7]{meeroy96} or~\cite[Theorem~6.1]{ahltastei18a}) that $\lambda_\Phi(p)$ is continuous as a function of $p$, and therefore that also $\Pr_{\lambda_\Phi(p)\pm\eps}(\cross(3n,n))$ is continuous in $p$, as a slight shift in $\lambda$ is unlikely to (\emph{i})~add or remove any point; and (\emph{ii})~change the value to either point already present. In particular, for each $p\in[0,1]$ we may obtain an $n\ge N$ such that $\Pr_{\lambda_\Phi(p)+\eps}(\cross(3n,n))>1-2\gamma/3$ in a neighbourhood of $p$. Due to compactness of the interval $[0,1]$, taking a finite subcover, we may find a single $n\ge N$ for which $\Pr_{\lambda_\Phi(p)+\eps}(\cross(3n,n))>1-2\gamma/3$ holds uniformly in $p\in[0,1]$. Consequently, by~\eqref{eq:iid-comparison}, for this $n$ we have
$$
\inf_{p\in[0,1]}\Pbf_{\lambda_\Phi(p)+\eps}\big(\cross(3n,n)\big)>1-\gamma
$$
for all large $\mu$. Similarly we obtain $\sup_{p\in[0,1]}\Pbf_{\lambda_\Phi(p)-\eps}\big(\cross(n,3n)\big)<\gamma$ for large enough $\mu$. In conclusion,
$$
\sup_{p\in[0,1]}\big|\lambda_c(\mu,p)-\lambda_\Phi(p)\big|\le\eps
$$
for large enough $\mu$, as required.
\end{proof}

We next examine the behaviour when $\mu$ is small, and the corresponding Voronoi cells large.
Part~\emph{c)} of Theorem~\ref{thm:comparison} is an immediate consequence of the following result.

\begin{prop}\label{prop:box-large}
There exists $\beta\in[\frac12,1)$ such that $\lambda_c:(0,\infty)\times[0,1]\to[\lambda_b,\lambda_a]$ satisfies
$$
\lim_{\mu\to0}\lambda_c(\mu,p)=\left\{
\begin{aligned}
&\lambda_b && \text{for all }p>1-\beta,\\
&\lambda_a && \text{for all }p<\beta.
\end{aligned}
\right.
$$
In particular, for each sufficiently small $\mu>0$ we have
$$
\lambda_c(\mu,\beta/2)>\lambda_\Phi(\beta/2)\quad\text{and}\quad\lambda_c(\mu,1-\beta/2)<\lambda_\Phi(1-\beta/2).
$$
\end{prop}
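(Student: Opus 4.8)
\emph{Proof plan.} The plan is to couple the model at small $\mu$ with a rescaled unit-intensity Poisson--Voronoi percolation and to use that the critical probability of the latter equals $\tfrac12$ (Bollob\'as and Riordan, see~\cite{bolrio06a}); this is what identifies the constant, which may be taken to be $\beta=1-p_c^{\mathrm{Vor}}=\tfrac12$. Write $\Gamma_b=\{x:\varphi(x)=b\}$ and $\Gamma_a=\overline{\R^2\setminus\Gamma_b}$, so $(\Gamma_b,\Gamma_a)$ is the occupied/vacant pair of a Voronoi percolation of intensity $\mu$ with colour probability $p$. Rescaling space by $\sqrt\mu$ turns the tessellation into a unit-intensity Voronoi percolation (independent of $\mu$), turns an occupied disc into one of radius $b\sqrt\mu$ (centre in $\Gamma_b$) or $a\sqrt\mu$ (centre in $\Gamma_a$), and turns the Poisson intensity into $\lambda/\mu$; by the $r^{-2}$-scaling of the constant-radius critical intensity, the radius-$b\sqrt\mu$ model at intensity $\lambda/\mu$ is supercritical exactly when $\lambda>\lambda_b$, and the radius-$a\sqrt\mu$ model is subcritical exactly when $\lambda<\lambda_a$. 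Since the field is bounded and invariant under right-angle rotations, $\pi_\lambda(n)=0$ for large $n$, so by Proposition~\ref{prop:cor-bound} $\rho_\lambda(n)\le\overline\pi(\mu,n)=\overline\pi(1,\sqrt\mu n)\to0$ uniformly in $\lambda$; thus Propositions~\ref{prop:finite-size} and~\ref{prop:infinite-size} are available, and the super-exponential decay of $\overline\pi(1,\cdot)$ makes the finite-size threshold satisfy $N=N(\mu)=O(1/\sqrt\mu)$. Recall finally that $\lambda_b\le\lambda_c(\mu,p)\le\lambda_a$ always.

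\emph{Case $p>1-\beta=\tfrac12$.} It suffices to show $\lambda_c(\mu,p)\le\lambda$ for every $\lambda>\lambda_b$ and every sufficiently small $\mu$; by Propositions~\ref{prop:finite-size}(a) and~\ref{prop:infinite-size} it is enough to produce, for small $\mu$, some $n\ge N(\mu)$ with $\Pbf_\lambda(\cross(3n,n))>1-1/200$. In rescaled coordinates I would fix a macroscopic rectangle $[0,3R]\times[0,R]$ and take $R$ large enough that both $R/\sqrt\mu\ge N(\mu)$ and the supercritical region $\Gamma_b$ contains a horizontal crossing of the rectangle with probability exceeding $1-1/400$, witnessed by a continuous curve $\gamma\subseteq\operatorname{int}\Gamma_b$. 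On that event $\gamma$ has a random distance $2\delta>0$ from $\partial\Gamma_b$, so a $\delta$-tube $T$ about $\gamma$ lies in $\Gamma_b$; when $b\sqrt\mu<\delta$ the only discs meeting $T$ are occupied discs of radius $b\sqrt\mu$, with centres in a slightly wider sub-tube of $\Gamma_b$, and these form a supercritical constant-radius Gilbert model, which by standard facts for the planar Boolean model crosses $T$ along its length with conditional probability tending to $1$ as $\mu\to0$ (modulo routine adjustments near the short sides of the rectangle). Bounded convergence over the Voronoi configuration then yields $\Pbf_\lambda(\cross(3n,n))\to\big(\text{the }\Gamma_b\text{-crossing probability}\big)>1-1/400$ as $\mu\to0$, along $n=\lceil R/\sqrt\mu\rceil$; feeding this into Propositions~\ref{prop:finite-size}(a) and~\ref{prop:infinite-size} gives $\Pbf_\lambda(0\stackrel{\occ}{\leftrightarrow}\infty)>0$. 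Letting $\lambda\downarrow\lambda_b$ gives $\lim_{\mu\to0}\lambda_c(\mu,p)=\lambda_b$.

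\emph{Case $p<\beta=\tfrac12$.} Dually, it suffices to show $\lambda_c(\mu,p)\ge\lambda$ for every $\lambda<\lambda_a$ and small $\mu$; by Proposition~\ref{prop:finite-size}(b) and the inequality $\Pbf_\lambda(0\stackrel{\occ}{\leftrightarrow}\infty)\le4\,\Pbf_\lambda(\cross(n,3n))$ from the proof of Theorem~\ref{thm:nontrivial}(a), it is enough to produce, for small $\mu$, some $n\ge N(\mu)$ with $\Pbf_\lambda(\cross(n,3n))<1/200$. Now $\Gamma_a$ is the supercritical region, so for $R$ large it contains, with probability exceeding $1-1/400$, a bottom-to-top crossing of $[0,R]\times[0,3R]$ realised by a curve $\gamma\subseteq\operatorname{int}\Gamma_a$; its $\delta$-tube $T$ lies in $\Gamma_a$, and for $b\sqrt\mu<\delta$ the only discs meeting $T$ are the radius-$a\sqrt\mu$ discs with centres near $T$, which form a \emph{subcritical} constant-radius Gilbert model. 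By standard facts this subcritical model leaves, conditionally on the Voronoi configuration and with probability tending to $1$ as $\mu\to0$, a vacant path crossing $T$ along its length; such a path joins the bottom and top of $[0,R]\times[0,3R]$, hence separates its left and right sides and blocks every occupied horizontal crossing. Bounded convergence gives $\Pbf_\lambda(\cross(n,3n))\to\big(\text{a quantity}\le1/400\big)$ as $\mu\to0$ along $n=\lceil R/\sqrt\mu\rceil$; Proposition~\ref{prop:finite-size}(b) then forces $\Pbf_\lambda(\cross(n,3n))\to0$ as $n\to\infty$, whence $\Pbf_\lambda(0\stackrel{\occ}{\leftrightarrow}\infty)=0$, and letting $\lambda\uparrow\lambda_a$ gives $\lim_{\mu\to0}\lambda_c(\mu,p)=\lambda_a$. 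The ``in particular'' is then immediate: $\beta/2<\beta$ forces $\lambda_c(\mu,\beta/2)\to\lambda_a>\lambda_\Phi(\beta/2)$ (the strict inequality is~\cite[Theorem~5.1]{roytan02}), and $1-\beta/2>1-\beta$ forces $\lambda_c(\mu,1-\beta/2)\to\lambda_b<\lambda_\Phi(1-\beta/2)$.

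\emph{Main obstacle.} The crux is the interchange of the two scales: one must fix the macroscopic scale $R$ to pin down the Voronoi crossing probability \emph{before} sending $\mu\to0$, and then argue that conditionally on a favourable Voronoi configuration---which, crucially, has to be ``open'', in the sense that the crossing curve has strictly positive (though random) distance to $\partial\Gamma_b$ respectively $\partial\Gamma_a$, a point one gets from compactness---the Gilbert model confined to the corresponding tube crosses it (supercritical case) or fails to block it (subcritical case) with conditional probability tending to $1$. Making this convergence good enough to pass to the limit by bounded convergence, while simultaneously keeping $n=\lceil R/\sqrt\mu\rceil$ above the $\mu$-dependent threshold $N(\mu)$, is the main bookkeeping burden. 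The only genuinely external ingredient is the exact value $p_c^{\mathrm{Vor}}=\tfrac12$, which ensures the relevant Voronoi region is truly supercritical for every $p$ on the correct side of $\tfrac12$, and is what pins down $\beta$.
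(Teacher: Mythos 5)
Your plan is sound in outline, but it takes a genuinely different---and more ambitious---route than the paper. The paper never touches the critical point of Voronoi percolation: it renormalizes at scale $m=1/\sqrt{\mu}$, declares an $m\times m$ box open when it is \emph{entirely} covered by $b$-valued cells, observes that the law of this coarse-grained field is independent of $m$, and notes that for $p$ close enough to $1$ the crossing event $E_n$ for the coarse field (at the \emph{fixed} finite scale $n\ge N$) has probability $>1-\gamma/2$ by a union bound over boxes; conditionally on two adjacent open boxes, the constant-radius-$b$ model at $\lambda>\lambda_b$ supplies the local crossings $F_i(y)$ with high probability for large $m$, and a union bound over the $6n^2$ adjacent pairs feeds into Proposition~\ref{prop:finite-size}. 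This only works for $p$ near the extremes, which is exactly why the proposition asserts the existence of \emph{some} $\beta\in[\tfrac12,1)$ and why the authors explicitly record $\beta=\tfrac12$ as plausible but unproven. Your argument instead imports the Bollob\'as--Riordan theorem that the critical probability for planar Poisson--Voronoi percolation is $\tfrac12$ (together with the fact that supercritical crossing probabilities of $3R\times R$ rectangles tend to $1$), and then runs a two-scale tube argument; if carried out, it proves the sharper statement with $\beta=\tfrac12$, strictly more than the proposition claims. What each approach buys: the paper's is elementary and self-contained at the price of an unidentified $\beta$; yours pins down $\beta$ but leans on a deep external theorem and defers to ``standard facts'' the steps where the real work sits---the positive (random) distance $2\delta$ from the crossing curve to $\partial\Gamma_b$, the chaining of $O(R/\delta)$ supercritical square-crossings along a curved tube, the continuum occupied/vacant duality needed in the subcritical case to turn the absence of a widthwise occupied cluster into a lengthwise vacant path, and the bounded-convergence step over the ($\mu$-independent, rescaled) tessellation. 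I see no fatal flaw in any of these, and your bookkeeping of the scales is correct: since $\pi_\lambda(n)$ vanishes beyond a constant scale and $\overline\pi(\mu,n)=\overline\pi(1,\sqrt{\mu}\,n)$ decays super-exponentially, $N(\mu)=O(1/\sqrt{\mu})$, so fixing the macroscopic $R$ before sending $\mu\to0$ keeps $n=\lceil R/\sqrt{\mu}\rceil$ above threshold, exactly as required.
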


\begin{proof}
Since $b>a$, the critical density $\lambda_c(\mu,p)$ is non-increasing in $p$. It will suffice to show that for some $p<1$ and every $\lambda>\lambda_b$ there exists $n\ge N$ such that for sufficiently small $\mu$
$$
\Pbf_\lambda\big(\cross(3n/\sqrt{\mu},n/\sqrt{\mu})\big)>1-\gamma.
$$
In this case Proposition~\ref{prop:finite-size} implies that $\lambda_c(\mu,p)\le\lambda$. Similarly, if for some $p>0$ and every $\lambda<\lambda_a$ there exists $n\ge N$ such that for all small $\mu$ we have
$$
\Pbf_\lambda\big(\cross(n/\sqrt{\mu},3n/\sqrt{\mu})\big)<\gamma,
$$
then $\lambda_c(\mu,p)\ge\lambda$. Since the two cases are similar we only prove the former.

To this end we introduce an auxiliary parameter $m\ge1$, and fix $\mu=1/m^2$. Tile the plane with $m\times m$-boxes centred at the points of $(m\Z)^2$, and notice that as $m$ varies the Voronoi tiling scales accordingly. Let $\omega\in\{0,1\}^{\Z^2}$ be defined by
\begin{equation*}
\omega_z:=\left\{
\begin{aligned}
& 1 && \text{if $\varphi(x)=b$ for every }x\in mz+[-m/2,m/2]^2,\\
& 0 && \text{otherwise}.
\end{aligned}
\right.
\end{equation*}
Consider a rectangle made up of $3n\times n$ boxes of dimension $m\times m$. Let $E_n$ denote the event that there is a horizontal crossing of this rectangle of boxes for which $\omega$ takes the value 1. Note how the probability of $E_n$ is independent of $m$.

Given $y\in\Z^2$ let $F_1(y)$ denote the event that there is an occupied horizontal crossing of the rectangle $my+[-m/4,5m/4]\times[-m/4,m/4]$, and occupied vertical crossings of $my+[-m/4,m/4]^2$ and $m(y+{\bf e}_1)+[-m/4,m/4]^2$. We define $F_2(y)$ similarly, that there is an occupied vertical crossing of $my+[-m/4,m/4]\times[-m/4,5m/4]$ and occupied horizontal crossings of $my+[-m/4,m/4]^2$ and $m(y+{\bf e}_2)+[-m/4,m/4]^2$.

Fix $\lambda>\lambda_b$ and $n\ge N$. For all large $m$ we will have for $i=1,2$ that
\begin{equation}\label{e.daeq1}
\Pbf_\lambda\big(F_i(y)\,\big|\,\omega_y=\omega_{y+{\bf e}_i}=1\big)>1-\gamma/(12n^2).
\end{equation}
As there are no more than $6n^2$ neighbouring pairs of $m\times m$-squares in an $3nm\times nm$-rectangle, equation~\eqref{e.daeq1} implies that for all large $m$
$$
\Pbf_\lambda\big(\cross(3nm,nm)\big|E_n\big)>1-\gamma/2.
$$
As $\overline\Pr(E_n)>1-\gamma/2$ for $p$ close enough to 1, we have
$$
\Pbf_\lambda\big(\cross(3nm,nm)\big)\,\ge\,\Pbf_\lambda\big(\cross(3nm,nm)\big|E_n\big)\overline\Pr(E_n)\,>\,(1-\gamma/2)^2\,>\,1-\gamma.
$$
So, for the chosen $\lambda>\lambda_b$ and $p<1$ we have, via Proposition~\ref{prop:finite-size}, that $\lambda_c(1/m^2,p)\le\lambda$ for all large $m$, as required.
\end{proof}

It seems reasonable to believe that Proposition~\ref{prop:box-large} should hold with $\beta=1/2$, and that $\lambda_c(\mu,p)$ exhibits a threshold behaviour around $p=1/2$. More intriguing, perhaps, is to understand how $\lambda_c(\mu,p)$ behaves at $p=1/2$.

\begin{quest}
Does $\lambda_c(\mu,1/2)$ converge as $\mu\to0$, and to which value?
\end{quest}

\newcommand{\noopsort}[1]{}\def\cprime{$'$}

{\small
\noindent
{\sc Daniel Ahlberg\\
Instituto Nacional de Matem\'atica Pura e Aplicada\\
Estrada Dona Castorina 110, 22460-320 Rio de Janeiro, Brasil\\
and\\
Department of Mathematics, Uppsala University\\
SE-751 06 Uppsala, Sweden\\
}

\noindent
{\sc Johan Tykesson\\
Department of Mathematics, Chalmers University of Technology\\
SE-412 96 Gothenburg, Sweden}\\
\noindent
{\sc and\\
Department of Mathematics, University of Gothenburg\\
SE-412 96 Gothenburg, Sweden}\\
}

\end{document}